\pgfplotsset{compat=1.14}
\author{
  Zilin Jiang\thanks{School of Mathematical and Statistical Sciences, and School of Computing and Augmented Intelligence, Arizona State University, Tempe, AZ 85281, USA. Email: {\tt zilinj@asu.edu}. Supported in part by an AMS Simons Travel Grant and by U.S. taxpayers through NSF Award DMS-1953946. Part of the work was done when Z.~Jiang was a postdoctoral fellow at Technion -- Israel Institute of Technology, and was supported in part by ISF grant nos 409/16, 936/16.}
  \and
  Amir Yehudayoff\thanks{Department of Mathematics, Technion -- Israel Institute of Technology, Technion City, Haifa 3200003, Israel. Email: {\tt amir.yehudayoff@gmail.com}. Supported in part by ISF grant no 1162/15.}
}
\title{An isoperimetric inequality for Hamming balls \\ and local expansion in hypercubes}
\date{}
\newtheorem{theorem}{Theorem}
\newtheorem{lemma}[theorem]{Lemma}
\newtheorem{proposition}[theorem]{Proposition}
\theoremstyle{remark}
\newtheorem*{remark}{Remark}
\newtheorem{myclaim}{Claim}
\Crefname{myclaim}{Claim}{Claims}
\newenvironment{claimproof}[1][Proof of Claim]{\begin{proof}[#1]}{\end{proof}}
\newcommand{\A}{\mathcal{A}}
\newcommand{\B}{\mathcal{B}}
\newcommand{\C}{\mathcal{C}}
\newcommand{\M}{\mathcal{M}}
\newcommand{\cS}{\mathcal{S}}
\newcommand{\F}{\mathbb{F}}
\newcommand{\N}{\mathbb{N}}
\newcommand{\R}{\mathbb{R}}
\newcommand{\hg}{\mathfrak{H}}
\newcommand{\abs}[1]{\left\lvert#1\right\rvert}
\newcommand{\dset}[2]{\left\{#1 \colon #2\right\}}
\newcommand{\sset}[1]{\left\{#1\right\}}
\newcommand{\bd}{\partial}
\newcommand{\bp}{\partial^+}
\newcommand{\bm}{\partial^-}
\newcommand{\ncr}{\binom{n}{r}}
\newcommand{\eps}{\varepsilon}
\DeclarePairedDelimiter{\floor}{\lfloor}{\rfloor}
\begin{document}

\maketitle

\begin{abstract}
We prove a vertex isoperimetric inequality for the $n$-dimensional Hamming ball $\mathcal{B}_n(R)$ of radius $R$. The isoperimetric inequality is sharp up to a constant factor for sets that are comparable to $\mathcal{B}_n(R)$ in size. A key step in the proof is a local expansion phenomenon in hypercubes.
\end{abstract}

\section{Introduction}

Isoperimetric inequalities allow to control the boundary size or surface area of bodies in terms of their volume. The classical isoperimetric inequality states that in Euclidean spaces, balls have the smallest surface area per given volume. Such inequalities are fundamental in geometry, and are deeply related to many areas of mathematics and physics. 

In this paper, we consider discrete spaces. For a graph $G = (V, E)$ and a subset $X$ of vertices, the \emph{vertex boundary}\footnote{Another interpretation of the term ``boundary'' for graphs is the edge boundary: the set of edges exiting $X$.} $\bd_G X$ of $X$ is the set of vertices in $V\setminus X$ which have a neighbor in $X$. The vertex isoperimetric problems for graphs concern the minimum possible vertex boundary size of $X$ given its size.

We focus on the vertex isoperimetric problem for Hamming balls. The $n$-dimensional \emph{Hamming ball} $B_n(r)$ of radius $r$ is the graph with vertex set $\B_n(r)$ consists of all subsets of $[n]$ of size at most $r$, and two subsets are adjacent if they differ by exactly one element.

We establish the following approximate isoperimetric inequality for Hamming balls.

\begin{theorem}[Isoperimetric inequality for Hamming balls] \label{thm:iso-ball}
  For every $\rho \in (0, 1/2)$, there is a positive integer $n_0$ so that the following holds. For every $n \geq n_0$, $R\leq n/2$, and $\A \subseteq \B_n(R)$, if
  \[
    \abs{\B_n(\floor{\rho n})} \le \abs{\A} \le \abs{\B_n(R)} - \abs{\B_n(\floor{\rho n})},
  \]
  then the vertex boundary of $\A$ in the Hamming ball $B_n(R)$ satisfies
  \[
    \abs{\bd_{B_n(R)}\A} \ge \frac{\rho^{3/2}}{18\sqrt{n}}\min\left(\abs{\A}, \abs{\B_n(R)\setminus\A}\right).
  \]
\end{theorem}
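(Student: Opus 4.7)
My plan is to decompose the boundary level-by-level and reduce the problem to a local expansion estimate on individual slices of the cube. Writing $\A_k = \A \cap \binom{[n]}{k}$ for $0 \le k \le R$, a vertex $B \in \binom{[n]}{k}$ belongs to $\bd_{B_n(R)}\A$ precisely when $B \notin \A_k$ yet $B$ extends some element of $\A_{k-1}$ or contains some element of $\A_{k+1}$, with the convention $\A_{R+1} = \varnothing$ since upward neighbors at level $R$ fall outside $\B_n(R)$. Thus $\abs{\bd_{B_n(R)} \A}$ equals the sum over $k$ of the \emph{external} upper and lower shadows $\abs{\bp \A_k \setminus \A_{k+1}}$ and $\abs{\bm \A_k \setminus \A_{k-1}}$. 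As a first reduction I would apply iterated shifting (left-compression) operators along each coordinate so that every $\A_k$ becomes an initial segment of $\binom{[n]}{k}$ in the colex order; a routine verification should confirm that these shifts cannot enlarge the boundary inside $B_n(R)$.

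The core ingredient should be a \emph{local expansion} lemma for a pair of consecutive slices: for $\rho n \le k < R$ and initial segments $\A_k, \A_{k+1}$ that are neither simultaneously empty nor simultaneously full, I would aim to prove
\[
  \abs{\bp \A_k \setminus \A_{k+1}} + \abs{\bm \A_{k+1} \setminus \A_k} \ge \frac{c\, \rho^{3/2}}{\sqrt{n}}\bigl(\abs{\A_k}+\abs{\A_{k+1}}\bigr)
\]
for some absolute constant $c > 0$. The scale $1/\sqrt{n}$ is the natural one, forced by $\binom{n}{k+1}/\binom{n}{k} = (n-k)/(k+1)$ being only $1-\Theta(1/\sqrt{n})$ away from~$1$ at the equator of the cube. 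I expect such an estimate to be accessible from the cascade form of the Kruskal--Katona theorem combined with a careful comparison between consecutive initial segments, with a complement-symmetry move used in the hardest regime.

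To finish, I would use the assumption $\abs{\A} \ge \abs{\B_n(\floor{\rho n})}$ to argue that a nonnegligible fraction of the mass of $\A$ lies in levels $k \ge \floor{\rho n}$, so that summing the local expansion estimate over those levels (choosing the upper or lower direction as appropriate near $k=R$) recovers the claimed bound $\rho^{3/2}\abs{\A}/(18\sqrt{n})$. The upper bound $\abs{\A} \le \abs{\B_n(R)} - \abs{\B_n(\floor{\rho n})}$ enters symmetrically via the complement $\B_n(R) \setminus \A$ and accounts for the $\min$ on the right-hand side of the inequality. The principal obstacle I foresee is the local expansion lemma itself when $\abs{\A_k}$ approaches $\binom{n}{k}$: Kruskal--Katona is then essentially tight and the external upper shadow nearly vanishes, so the expansion must be extracted from the complement through its lower shadow, forcing one to run a genuinely two-sided argument rather than a one-sided shadow bound.
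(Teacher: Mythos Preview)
Your pairwise local-expansion lemma is false as stated, and this is a genuine gap rather than a technicality. Take $\A_k = \binom{[n]}{k}$ and let $\A_{k+1}$ be a colex initial segment of size $\binom{n}{k+1}-m$ for any $m \ll \binom{n}{k+1}/\sqrt n$. Then $\bm\A_{k+1}\setminus\A_k=\varnothing$ and $\bp\A_k\setminus\A_{k+1}$ has size exactly $m$, while your right-hand side is of order $\binom{n}{k}/\sqrt n$; so the inequality fails by an arbitrarily large factor. The ``two-sided'' fix you anticipate (passing to complements) does not help at the level of a single pair $(k,k+1)$, because both $\A_k$ and $\A_{k+1}$ can be nearly full simultaneously. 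More generally, no pairwise statement of the form $\abs{\bp\A_k\setminus\A_{k+1}}+\abs{\bm\A_{k+1}\setminus\A_k}\ge c\,n^{-1/2}(\abs{\A_k}+\abs{\A_{k+1}})$ can hold across all densities, since the left side can be made $O(1)$ while the right side is exponential. The shifting step is also shakier than you suggest: coordinate shifts make each $\A_k$ left-compressed, not a colex initial segment, and the stronger compressions that do force initial segments are not obviously boundary-monotone inside $B_n(R)$.

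The paper's route avoids both issues. Its local-expansion statement (\cref{thm:expLYM}) is for a \emph{single} slice $\A_r\subseteq\cS_n(r)$ and bounds $\abs{\bp_n\A_r}+\abs{\bm_n\A_r}$ by the normalized-matching value plus an excess $\sqrt{n/(r(n-r))}\,\alpha_r(1-\alpha_r)\binom{n}{r}$; crucially this excess vanishes as $\alpha_r\to 1$, so it is useless precisely in the regime where your lemma breaks. The global argument (\cref{thm:main_bound}) therefore runs a dichotomy: either the density profile $(\alpha_r)_{r_0\le r\le R}$ has total variation $\sum_r\abs{\alpha_r-\alpha_{r+1}}$ bounded below by a constant, in which case the normalized matching property alone produces enough boundary between consecutive levels, or every $\alpha_r$ is uniformly bounded away from $1$, in which case the single-slice excess from \cref{thm:expLYM} can be summed. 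Your plan is missing exactly this dichotomy: you need a global reason why the densities cannot all sit near $1$, and that reason is the size hypothesis $\abs{\A}\le\abs{\B_n(R)}-\abs{\B_n(\floor{\rho n})}$ fed through a total-variation argument, not a level-pair estimate. No compressions are used anywhere in the paper's proof.
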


\cref{thm:iso-ball} is sharp up to a constant factor depending only on $\rho$ for $\A$ that are comparable to $\B_n(R)$ in size.

\begin{proposition} \label{prop:iso-ball-sharp}
  For every $\eps \in (0,1/2)$ and $n, R \in \N$ such that $\eps n \le R \le n / 2$, and for every $\alpha \in (\eps, 1-\eps)$, there exists $\M \subseteq \B_n(R)$ of size $\floor{\alpha \abs{\B_n(R)}}$ such that 
  \[
    \abs{\bd_{B_n(R)}\M} \le O_\eps\left(1/\sqrt n\right)\min(\abs{\M}, \abs{\B_n(R)\setminus \M}).
  \]
\end{proposition}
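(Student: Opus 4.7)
I plan to take $\M$ as the sublevel set of the balanced linear functional $A \mapsto |A \cap S| - |A \cap S'|$, where $\sset{S, S'}$ is a partition of $[n]$ into two nearly-equal halves---the natural discrete analog of the ``equator cut'' that achieves sharpness for the vertex isoperimetric inequality on the hypercube.

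\textbf{Construction and boundary analysis.} First fix a partition $[n] = S \sqcup S'$ with $\abs{|S| - |S'|} \le 1$, and for $A \in \B_n(R)$ write $a(A) = |A \cap S|$, $b(A) = |A \cap S'|$, $\M_c := \dset{A \in \B_n(R)}{a(A) - b(A) \le c}$, and $L_d := \dset{A \in \B_n(R)}{a(A) - b(A) = d}$. Let $c^*$ be the largest integer with $|\M_{c^*}| \le \floor{\alpha|\B_n(R)|}$, and take $\M := \M_{c^*} \cup T$ for any $T \subseteq L_{c^*+1}$ with $|T| = \floor{\alpha|\B_n(R)|} - |\M_{c^*}|$, so $|\M| = \floor{\alpha|\B_n(R)|}$ by construction. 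Each edge of $B_n(R)$ changes $a - b$ by exactly $\pm 1$, so any $A \notin \M$ with a neighbor in $\M$ must satisfy $a(A) - b(A) \in \sset{c^*+1, c^*+2}$. Hence
\[
  \bd_{B_n(R)} \M \subseteq L_{c^*+1} \cup L_{c^*+2}.
\]

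\textbf{Size estimates.} It remains to show (i) $|L_d|/|\B_n(R)| = O_\eps(1/\sqrt n)$ uniformly in $d$, and (ii) $|c^*| = O_\eps(\sqrt n)$. For (i), observe that $|L_d|/|\B_n(R)|$ equals $\Pr[a(A) - b(A) = d]$ when $A$ is sampled uniformly from $\B_n(R)$; conditional on $|A| = k$, the quantity $a - b = 2a - k$ is (up to an affine transformation) hypergeometric with variance $\Theta(k)$, so the standard Gaussian/Stirling estimate for hypergeometric densities gives $\Pr[a - b = d \mid |A| = k] = O(1/\sqrt k)$. Because the binomials $\binom{n}{k}$ grow geometrically for $k \le R \le n/2$, $|A|$ concentrates near $R = \Theta_\eps(n)$, yielding $\Pr[a - b = d] = O_\eps(1/\sqrt n)$. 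The same reasoning (plus symmetry $S \leftrightarrow S'$) gives $\operatorname{Var}(a - b) = \Theta_\eps(n)$, and Chebyshev's inequality then forces $c^* \in [-O_\eps(\sqrt n), O_\eps(\sqrt n)]$, establishing (ii).

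\textbf{Conclusion.} Putting these together, $\abs{\bd_{B_n(R)}\M} \le |L_{c^*+1}| + |L_{c^*+2}| = O_\eps(|\B_n(R)|/\sqrt n)$. Since $\alpha \in (\eps, 1-\eps)$, both $|\M|$ and $|\B_n(R) \setminus \M|$ are at least $\eps|\B_n(R)| - 1 = \Omega_\eps(|\B_n(R)|)$ for $n$ sufficiently large, which yields the claimed $O_\eps(1/\sqrt n)$ isoperimetric ratio. The hard part will be the local-limit-type bound in (i), which ultimately reduces to the Stirling estimate $\binom{n/2}{R/2}^2 = O_\eps(|\B_n(R)|/\sqrt n)$; the rest is routine bookkeeping.
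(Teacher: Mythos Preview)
Your proposal is correct and follows essentially the same route as the paper: both take $\M$ to be a sublevel set of $A \mapsto |A\cap S| - |A|/2$ with $|S|=\lfloor n/2\rfloor$, contain the boundary in two level sets, and bound each level set via the pointwise hypergeometric density estimate $O(\sqrt{n/(r(n-r))})$ together with the geometric decay of $\binom{n}{r}$ for $r\le R/2$. Your item~(ii) on the location of $c^*$ is superfluous---as your own concluding paragraph shows, (i) alone plus $\alpha\in(\eps,1-\eps)$ already gives $|\bd_{B_n(R)}\M| = O_\eps(|\B_n(R)|/\sqrt n) = O_\eps(1/\sqrt n)\min(|\M|,|\B_n(R)\setminus\M|)$.
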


Our results are discrete analogs of an isoperimetric inequality in Gaussian space. To illustrate the analogy, we recall the following classical isoperimetric inequalities. The $n$-dimensional \emph{hypercube} $Q_n$ is the $n$-dimensional Hamming ball $B_n(n)$ of radius $n$.
\begin{enumerate}
  \item (The Gaussian isoperimetric inequality \cite{MR0365680,MR399402}) Among all sets of a given standard Gaussian measure in $\R^n$, half-spaces minimize the Gaussian boundary measure.
  \item (Harper's theorem \cite{harper1966optimal}) Among all vertex subsets of $Q_n$ of the size $\abs{\B_n(R)}$, the Hamming ball $\B_n(R)$ has the smallest vertex boundary in $Q_n$.
\end{enumerate}
Harper's theorem can be seen as a discrete analog of the Gaussian isoperimetric inequality. Indeed, by viewing a subset of $[n]$ as its indicating vector, the Hamming ball $\B_n(R)$ can be thought of as a half-space whose bounding hyperplane has a normal vector $v_1 := (1,\dots,1)$.

\begin{table}
  \centering
  \begin{tabular}{c|cc|cc}
    & \multicolumn{2}{c|}{Gaussian} & \multicolumn{2}{c}{Discrete} \\ \hline
    Space & $\R^n$ & $H$ & $Q_n$ & $B_n(R)$ \\
    Minimizer & $H$ & $H \cap M$ & $\B_n(R)$ & ?
  \end{tabular}
  \caption{Gaussian isoperimetric problems and their discrete analogs.}
  \label{table:analog}
\end{table}

We are concerned with the discrete space $B_n(R)$. Its Gaussian analog should concern a half-space $H \subseteq \R^n$ endowed with the conditional Gaussian measure. It is known that the minimizers of the boundary measure are sets of the form $H \cap M$ where $M$ is another half-space whose bounding hyperplane is perpendicular to that of $H$ (see \cite[Proposition~5.1]{lee2006isoperimetric}).

Which vertex subsets have the smallest vertex boundary in $B_n(R)$? From \cref{table:analog} the answer should be a discrete analog of $H \cap M$. \cref{thm:iso-ball,prop:iso-ball-sharp} answer this question approximately. Indeed one of the examples in \cref{prop:iso-ball-sharp} is defined by
\[
  \M := \dset{X \in \B_n(R)}{\abs{X \cap \sset{1, \dots, n/2}} \le \abs{X} / 2},
\]
which can be seen as the intersection of the half-space $\B_n(R)$ with another half-space whose bounding hyperplane has a normal vector $v_2 := (1,1,\ldots,1,-1,-1,\ldots,-1)$, where $v_2$ has equal number of $1$'s and $-1$'s. As in the Gaussian analog, the two normal vectors $v_1$ and $v_2$ are orthogonal.

The key ingredient in the proof of \cref{thm:iso-ball} is a local expansion statement for hypercubes concerning the \emph{lower shadow} $\bm_n \A$ and the \emph{upper shadow} $\bp_n \A$ of $\A \subseteq \cS_n(r)$ in $Q_n$ defined by
\[
  \bm_n \A := (\bd_n\A) \cap \cS_n(r-1)
  \quad \text{and} \quad
  \bp_n \A := (\bd_n\A) \cap \cS_n(r+1).
\]
To put it in context, we recall the \emph{normalized matching property} of hypercubes, which can be proved by a simple double counting argument.

\begin{proposition}[Normalized matching property] \label{lem:LYM}
  Suppose $r$ and $s$ are two positive integers and $n = r + s$. For every $\A\subseteq \cS_n(r)$, its lower and upper shadows satisfy
  \[
    \pushQED{\qed}
    \abs{\bm_n\A} \ge \frac{r}{s+1}\abs{\A}
    \quad\text{and}\quad
    \abs{\bp_n\A} \ge \frac{s}{r+1}\abs{\A}.
    \qedhere\popQED
  \]
\end{proposition}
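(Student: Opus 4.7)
The plan is to prove both inequalities by the same double-counting argument applied to edges of the hypercube between adjacent levels, so I will describe the lower shadow bound and then remark on the symmetric argument for the upper shadow.

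For the lower shadow bound, I would count the set of pairs
\[
  P := \dset{(A, B)}{A \in \A,\ B \in \bm_n \A,\ B \subset A}.
\]
Every $A \in \A$ lies above exactly $r$ subsets of size $r-1$, each of which is a neighbor of $A$ in $Q_n$, so all of them lie in $\bm_n\A$; this gives $\abs{P} = r \abs{\A}$. On the other hand, every $B \in \bm_n\A$ (a set of size $r-1$) sits below exactly $n - (r-1) = s+1$ subsets of size $r$, of which at most all lie in $\A$. Therefore $\abs{P} \le (s+1)\abs{\bm_n\A}$. Combining the two estimates yields
\[
  r \abs{\A} \le (s+1)\abs{\bm_n\A},
\]
which is the first inequality.

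For the upper shadow, I would count the analogous set
\[
  P' := \dset{(A, C)}{A \in \A,\ C \in \bp_n \A,\ A \subset C}.
\]
Each $A \in \A$ lies below exactly $n - r = s$ sets of size $r+1$ (all of which are in $\bp_n\A$), contributing $\abs{P'} = s \abs{\A}$, while each $C \in \bp_n\A$ sits above exactly $r+1$ sets of size $r$, so $\abs{P'} \le (r+1)\abs{\bp_n\A}$. This gives $s\abs{\A} \le (r+1)\abs{\bp_n\A}$, i.e.\ the second inequality.

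There is no real obstacle here; the only subtlety to flag is that in both counts, every size-$(r\pm 1)$ neighbor of an element of $\A$ indeed belongs to the corresponding shadow, since a set of size $r-1$ (respectively $r+1$) adjacent to some $A \in \A \subseteq \cS_n(r)$ automatically lies outside $\A$. This is why the first count in each case is an equality and why no additional case analysis is needed.
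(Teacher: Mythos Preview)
Your proof is correct and is exactly the simple double-counting argument the paper alludes to; note that the paper itself does not write out a proof of this proposition but merely states it with a \qed, remarking that it ``can be proved by a simple double counting argument.'' Your handling of the one subtlety---that every size-$(r\pm1)$ neighbor of $A\in\A$ automatically lies outside $\A\subseteq\cS_n(r)$ and hence in the vertex boundary---is the right observation to make the equalities $\abs{P}=r\abs{\A}$ and $\abs{P'}=s\abs{\A}$ legitimate.
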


Although \cref{lem:LYM} is much weaker than the Kruskal--Katona theorem~\cite{MR0154827,MR0290982} or a weak form due to Lov\'asz~\cite[Ex.~13.31(b)]{MR1265492}, the normalized matching property is essentially sharp. For example, the lower shadow of $\A_0 := \dset{X\in\cS_n(r)}{1\not\in X}$ has size $\frac{r}{s}\abs{\A_0}$, and the upper shadow of $\A_1 := \dset{X\in\cS_n(r)}{1\in X}$ has size $\frac{s}{r}\abs{\A_1}$.

The two sets $\A_0$ and $\A_1$ are very different. It is natural to ask if the two inequalities in \cref{lem:LYM} can be essentially sharp for the same $\A$. Certainly, when $\A = \varnothing$ or $\A = \cS_n(r)$, equalities hold for both inequalities. However, we dash the hopes of a non-trivial set that behaves like both $\A_0$ and $\A_1$. We abbreviate $\bd_{Q_n}$ by $\bd_n$ throughout the article.

\begin{theorem}[Local expansion] \label{thm:expLYM}
  Suppose $r,s$ are two positive integers and $n = r + s$. For every $\A \subseteq \cS_n(r)$ of size $\alpha\ncr$, the vertex boundary of $\A$ in $Q_n$ satisfies
  \begin{equation}\label{eqn:expLYM}
    \abs{\bd_n\A} \ge \left(\frac{r}{s+1}+\frac{s}{r+1}\right)\abs{\A} + \sqrt{\frac{n}{rs}}\alpha(1-\alpha)\ncr.
  \end{equation}
\end{theorem}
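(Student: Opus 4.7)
The plan is to bound $\abs{\bm_n \A}$ and $\abs{\bp_n \A}$ separately via Cauchy--Schwarz, with a correction controlled by the number of edges inside $\A$ in the Johnson graph, and to extract the promised $\sqrt{n/(rs)}$ factor from the spectral gap of that graph.

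Set $d_X := \abs{\dset{A\in\A}{A\supset X}}$ for $X\in\cS_n(r-1)$, and analogously $d_Y^+$ for $Y\in\cS_n(r+1)$. Double counting yields $\sum_X d_X = r\abs{\A}$ and $\sum_X d_X^2 = r\abs{\A} + 2e_J(\A)$, where $e_J(\A)$ is the number of pairs $A,A'\in\A$ with $\abs{A\cap A'}=r-1$, i.e.\ edges of the Johnson graph $J(n,r)$ inside $\A$. Cauchy--Schwarz then gives $\abs{\bm_n\A}\ge r^2\abs{\A}^2/(r\abs{\A} + 2e_J(\A))$, and an analogous bound for $\abs{\bp_n\A}$ with $s$ in place of $r$. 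Substituting $t:=2e_J(\A)/\abs{\A}$, the sum becomes
\[
  \abs{\bd_n\A} \ge \abs{\A}\Bigl(\frac{r^2}{r+t}+\frac{s^2}{s+t}\Bigr),
\]
which is a decreasing function of $t$ that collapses to the LYM combined bound $r\abs{\A}/(s+1)+s\abs{\A}/(r+1)$ exactly at $t=rs$.

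To bound $t$ away from $rs$, I would invoke the fact that the second eigenvalue of the adjacency matrix of $J(n,r)$ is $rs-n$. Applied to the centred indicator $\mathbf{1}_\A - \alpha\mathbf{1}$, the expander-mixing lemma gives $rs\abs{\A} - 2e_J(\A) \ge n\alpha(1-\alpha)\ncr$, equivalently $t \le rs - n(1-\alpha)$. Plugging this upper bound on $t$ into the expression above and simplifying should yield \eqref{eqn:expLYM}; the coefficient $\sqrt{n/(rs)}$ is the natural scale coming from the normalised spectral gap $n/(rs)$ of $J(n,r)$.

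The hard part will be the final algebraic step. A crude Taylor expansion of $r^2/(r+t) + s^2/(s+t)$ around $t=rs$, combined with the spectral upper bound on $t$, produces a bound of order $\alpha(1-\alpha)\ncr/n$ rather than the desired $\alpha(1-\alpha)\ncr\sqrt{n/(rs)}$, losing a factor of $\sqrt{n}$ in the balanced regime $r\asymp s\asymp n/2$. So the algebra must avoid this linearisation, probably by carrying the two Cauchy--Schwarz bounds through in a weighted combination that preserves the $\sqrt{\cdot}$ structure, or by supplementing the global spectral bound with a local/stability refinement: when $\A$ is close to a Johnson eigenfunction (the regime where spectral is tight), $\A$ is close to a dictator and one of the two shadows is far from its LYM bound, so the deficit reappears on the other side. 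Making this dichotomy quantitative is the obstacle I would focus on.
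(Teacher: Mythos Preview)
Your spectral/Cauchy--Schwarz setup is correct as far as it goes: the identities $\sum_X d_X^2 = r\abs{\A} + 2e_J(\A)$ and $\sum_Y (d_Y^+)^2 = s\abs{\A} + 2e_J(\A)$ are right, the second Johnson eigenvalue really is $rs-n$, and the one-sided mixing bound $rs\abs{\A}-2e_J(\A)\ge n\alpha(1-\alpha)\ncr$ follows. But the $\sqrt{n}$ loss you flagged is not an artefact of linearisation---it is intrinsic to the two-step bound. With $t\le rs - n(1-\alpha)$ and $f(t)=r^2/(r+t)+s^2/(s+t)$, one has exactly
\[
  f(rs-\delta)-f(rs)=\int_{rs-\delta}^{rs}\Bigl(\tfrac{r^2}{(r+u)^2}+\tfrac{s^2}{(s+u)^2}\Bigr)\,du,
\]
and on the whole interval the integrand is $\Theta(1/(s+1)^2+1/(r+1)^2)$ whenever $\delta=n(1-\alpha)=o(rs)$. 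Hence $f(rs-\delta)-f(rs)=\Theta\bigl(n(1-\alpha)\cdot(1/r^2+1/s^2)\bigr)$, which in the balanced regime is $\Theta((1-\alpha)/n)$, not $\Theta((1-\alpha)/\sqrt{n})$. No weighted combination of the two Cauchy--Schwarz bounds can repair this, because both feed through the same quantity $e_J(\A)$, and once you have committed to bounding $e_J(\A)$ spectrally you have already lost the factor. The dichotomy you sketch would need a stability theorem for the Johnson spectral gap strong enough to say that whenever $t$ is within $o(n^{3/2})$ of $rs-n(1-\alpha)$ the set $\A$ is so close to a dictator that one shadow alone gains $\sqrt{n/(rs)}\,\alpha(1-\alpha)\ncr$; that is a substantially harder statement than the theorem itself, and you have not indicated how to prove it.

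The paper takes a completely different route: induction on $\min(r,s)$. One fixes an element (chosen by pigeonhole to lie in at least an $\alpha$-fraction of $\A$), splits $\A$ into the link $\A_1\subseteq\cS_{n-1}(r-1)$ and the deletion $\A_0\subseteq\cS_{n-1}(r)$, and applies the induction hypothesis to each. This produces two lower bounds $L_1(x),L_2(x)$ on $\abs{\bd_n\A}$ as explicit quadratics in $x:=\alpha_1-\alpha_0$: one from summing the two inductive bounds, the other from combining the inductive bound on $\A_1$ with LYM on $\bp_{n-1}\A_0$ and the trivial $\A_1\subseteq\bm_n\A$. The crux is then a purely algebraic verification that $\max(L_1(x),L_2(x))\ge \sqrt{n/(rs)}\,\alpha(1-\alpha)n$ for all admissible $x$, handled by an interlacing argument for the roots of $L_1=L_2$ and of $L_2=\,$const. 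The careful choice of pivot (so that $\alpha_1\ge\alpha$) and the second bound $L_2$ are what carry the $\sqrt{\cdot}$ through the induction; there is no global spectral input.
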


\begin{remark}
  This phenomenon is reminiscent in the sum-product theorem of Bourgain, Katz and Tao~\cite{bourgain2004sum}. Given a subset $A$ of a finite field $\F_p$, the sum set $A + A = \dset{a + b}{a,b \in A}$ could have size comparable to $A$ if $A$ behaves like an arithmetic progression, and the product set $A \cdot A = \dset{a\cdot b}{a, b \in A}$ could have size comparable to $A$ if $A$ behaves like a geometric progression. However, the sum-product theorem indicates that a ``non-trivial'' $A$ cannot simultaneously behave like an arithmetic progression and a geometric progression.
\end{remark}

Our proof of \cref{thm:expLYM}, given in \cref{sec:local}, is inspired by the work of Christofides, Ellis and Keevash~\cite{keevash}. They established a vertex isoperimetric inequality for the graph $S_n(r)$ with the vertex set $\cS_n(r)$, where two subsets are adjacent if their symmetric difference has size two. Their inequality is an approximate version of a folklore conjecture \cite[Conjecture~1]{bollobas2004isoperimetric} reported by Bollob\'as and Leader. Using a construction in \cite{keevash}, we show that \cref{thm:expLYM} is sharp for $r, s \ge \eps n$ up to a constant factor depending only on $\eps$ and $\alpha$.

\begin{proposition} \label{prop:local-expansion-sharp}
  For every $\eps \in (0,1/2)$ and $n, r, s \in \N$ and $r, s \ge r_0$ such that $n = r+s$ and $r,s\ge \eps n$, and for every $\alpha \in [0,1]$, there exists $\C \subseteq \cS_n(r)$ of size $\floor{\alpha\ncr}$ such that
  \[
    \abs{\bd_n\C} \le \left(\frac{r}{s+1} + \frac{s}{r+1}\right)\abs{\C} + O_\eps\left( 1/\sqrt{n} \right)\ncr.
  \]
\end{proposition}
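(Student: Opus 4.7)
The plan is to take $\C$ to be an approximate ``orthogonal slab'' inside $\cS_n(r)$, mirroring the Gaussian construction $H \cap M$ described in the introduction. Without loss of generality assume $r \le s$. Partition $[n] = A \sqcup B$ with $\abs{A} = \floor{n/2}$ and $\abs{B} = \ceil{n/2}$, and for each integer $t \in \sset{-1, 0, \dots, r}$ let
\[
  \C_t := \dset{X \in \cS_n(r)}{\abs{X \cap A} \le t}.
\]
Pick the unique $t$ with $\abs{\C_t} \le \floor{\alpha \ncr} < \abs{\C_{t+1}}$, and obtain $\C$ by adjoining $\floor{\alpha \ncr} - \abs{\C_t}$ arbitrary sets from $\dset{X \in \cS_n(r)}{\abs{X \cap A} = t + 1}$ to $\C_t$. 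Since $\C_t \subseteq \C \subseteq \C_{t+1}$, we have $\bd_n \C \subseteq \bm_n \C_{t+1} \cup \bp_n \C_{t+1}$; the discrepancies $\abs{\bm_n \C_{t+1}} - \abs{\bm_n \C_t}$ and $\abs{\bp_n \C_{t+1}} - \abs{\bp_n \C_t}$ are single fibres of hypergeometric distributions at levels $r - 1$ and $r + 1$, each of size $O_\eps(\ncr/\sqrt{n})$ by the local central limit theorem (valid because $r, s \ge \eps n$). Thus it suffices to bound $\abs{\bd_n \C_t}$.

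A short case analysis on adding or deleting a single element of $X \in \C_t$ shows
\[
  \bm_n \C_t = \dset{Y \in \cS_n(r-1)}{\abs{Y \cap A} \le t}, \qquad \bp_n \C_t = \dset{Y \in \cS_n(r+1)}{\abs{Y \cap A} \le t + 1}.
\]
Let $K^-, K, K^+$ denote hypergeometric random variables counting $A$-elements when $r-1, r, r+1$ elements are drawn uniformly without replacement from $[n]$. Then $\abs{\C_t} = \ncr \Pr(K \le t)$, $\abs{\bm_n \C_t} = \binom{n}{r-1} \Pr(K^- \le t)$, and $\abs{\bp_n \C_t} = \binom{n}{r+1} \Pr(K^+ \le t+1)$. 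Using $\binom{n}{r-1} = \frac{r}{s+1}\ncr$ and $\binom{n}{r+1} = \frac{s}{r+1}\ncr$, the excess $\abs{\bd_n \C_t} - \bigl(\frac{r}{s+1} + \frac{s}{r+1}\bigr)\abs{\C_t}$ equals
\[
  \tfrac{r}{s+1}\ncr \bigl[\Pr(K^- \le t) - \Pr(K \le t)\bigr] + \tfrac{s}{r+1}\ncr \bigl[\Pr(K^+ \le t + 1) - \Pr(K \le t)\bigr].
\]

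A one-step coupling realises $K^-, K, K^+$ as successive partial sums in a common sampling-without-replacement process, so that conditionally on $K$, the $r$-th draw lies in $A$ with probability $K/r$, while the $(r+1)$-th draw lies in $A$ with probability $(\abs{A} - K)/s$. This produces the closed forms
\begin{align*}
  \Pr(K^- \le t) - \Pr(K \le t) &= \tfrac{t+1}{r}\Pr(K = t + 1), \\
  \Pr(K^+ \le t + 1) - \Pr(K \le t) &= \tfrac{\abs{B} - r + t + 1}{s}\Pr(K = t + 1).
\end{align*}
Both prefactors are $O_\eps(1)$ because $r, s \ge \eps n$, and the pointwise density bound $\Pr(K = t + 1) \le C_\eps / \sqrt{n}$ follows from Stirling's formula in this balanced regime. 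Consequently each summand in the excess is $O_\eps(\ncr/\sqrt{n})$, yielding the proposition. The only genuinely technical ingredient is the density bound on the hypergeometric distribution, which is a standard consequence of Stirling's approximation applied to the four binomial coefficients defining $\Pr(K=k)$.
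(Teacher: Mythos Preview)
Your proof is correct and uses the same construction as the paper: the half-space family $\C_t=\{X\in\cS_n(r):\abs{X\cap A}\le t\}$ with $\abs{A}=\floor{n/2}$, together with the pointwise hypergeometric bound (the paper's Proposition~\ref{prop:hg-middle}). The one place where you deviate is in the comparison step. The paper, to control $\abs{\bp_n\C}-\frac{s}{r+1}\abs{\C}$, introduces an auxiliary set $\C^+\subseteq\cS_n(r+1)$ with a slightly stricter cutoff, observes $\bm_n\C^+\subseteq\C$, and invokes the normalized matching property (Proposition~\ref{lem:LYM}) to get $\abs{\C^+}\le\frac{s}{r+1}\abs{\C}$; the remainder $\bp_n\C\setminus\C^+$ is then two hypergeometric fibres. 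You instead write both $\abs{\bp_n\C_t}$ and $\frac{s}{r+1}\abs{\C_t}$ as $\binom{n}{r+1}$ times a hypergeometric CDF and evaluate the difference \emph{exactly} via the sequential-sampling coupling, obtaining a single point mass times an $O_\eps(1)$ prefactor. Your identity is a pleasant alternative to the matching step and yields the same bound with one fewer fibre to track; conversely, the paper's argument avoids having to justify the coupling identities and works uniformly without the mild edge-case issue at $t=-1$ (where $\bp_n\C_{t+1}\setminus\bp_n\C_t$ is two fibres rather than one, though this is harmless for the $O_\eps(1/\sqrt n)$ conclusion).
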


\paragraph{Acknowledgement.} We are grateful to Emanuel Milman for helpful discussions.

\section{Isoperimetric inequality for Hamming balls} \label{sec:ball}

We need the following simple estimate of $\abs{\cS_n(r)}$ in terms of $\abs{\B_n(r)}$. We postpone its proof to \cref{sec:binomial}

\begin{lemma} \label{lem:monotone}
  For every $0 \le r < n$,
  \[
    \frac{\abs{\cS_n(r)}}{\abs{\B_n(r)}} \ge \frac{\abs{\cS_n(r+1)}}{\abs{\B_n(r+1)}}.
  \] If in addition $n \ge 3$ and $r \le n/2$, then
  \[
    \abs{\cS_n(r)} \ge \abs{\B_n(r)} / \sqrt n.
  \]
\end{lemma}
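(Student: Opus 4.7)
The plan is to prove the two inequalities separately, using the first as a stepping stone for the second. For the monotonicity statement, the idea is to exploit that the ratio $\binom{n}{k+1}/\binom{n}{k} = (n-k)/(k+1)$ is decreasing in $k$, so for every $k \in \sset{0, \dots, r}$ we have $\binom{n}{k+1}\binom{n}{r} \ge \binom{n}{k}\binom{n}{r+1}$. Summing this over $k$ from $0$ to $r$ telescopes the left-hand side to $\binom{n}{r}(\abs{\B_n(r+1)} - 1)$ and collapses the right-hand side to $\binom{n}{r+1}\abs{\B_n(r)}$, and a single rearrangement delivers $\abs{\cS_n(r)}/\abs{\B_n(r)} \ge \abs{\cS_n(r+1)}/\abs{\B_n(r+1)}$.

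For the lower bound, I would first use the monotonicity just established to reduce to the single case $r = m := \floor{n/2}$, at which the ratio $\abs{\cS_n(r)}/\abs{\B_n(r)}$ is minimized over the range $r \le n/2$. At this $r$, the symmetry $\binom{n}{k} = \binom{n}{n-k}$ gives clean closed forms: $\abs{\B_n(m)} = 2^{n-1}$ when $n = 2m+1$, and $\abs{\B_n(m)} = 2^{n-1} + \binom{n}{m}/2$ when $n = 2m$. Combining these with the standard central-binomial estimate $\binom{2m}{m} \ge 4^m/(2\sqrt{m})$ (provable by a short induction based on $\binom{2m+2}{m+1}/\binom{2m}{m} = (4m+2)/(m+1)$), together with the identity $\binom{2m+1}{m} = \binom{2m}{m}(2m+1)/(m+1)$ in the odd case, reduces the required inequality to a straightforward polynomial inequality in $m$ in each parity. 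A direct check handles the smallest value $n = 3$.

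The main obstacle, as I anticipate it, is purely quantitative: getting the constants tight enough to deliver the $1/\sqrt{n}$ factor uniformly for all $n \ge 3$. Cruder tools, such as bounding $\abs{\B_n(r)}/\binom{n}{r}$ by the geometric series with ratio $r/(n-r+1)$, or by a Chernoff-style sum $\sum_j \exp(-cj^2/n)$, yield an upper bound of the form $C\sqrt{n}$ with $C > 1$, which would force separate treatment of many small values of $n$. Reducing via the monotonicity to $r = \floor{n/2}$ and then calling on the central-binomial estimate sidesteps this and gives the clean constant required.
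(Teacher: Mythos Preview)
Your proposal is correct and follows essentially the same route as the paper: the monotonicity step is exactly the paper's termwise comparison $\binom{n}{k}/\binom{n}{r} \le \binom{n}{k+1}/\binom{n}{r+1}$ summed over $k$ (you just clear denominators first), and the lower bound is obtained in the paper precisely as you outline, by reducing via monotonicity to $r=\lfloor n/2\rfloor$, splitting by parity, and invoking the central-binomial estimate $\binom{2m}{m}\ge 4^m/(2\sqrt{m})$.
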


The next technical lemma readily gives a lower bound on the vertex boundary in Hamming balls.

\begin{lemma} \label{thm:main_bound}
  For every $n, R \in \N$ such that $R \le n$, and every nonempty $\A \subseteq \B_n(R)$, set
  \[
    \eps := \frac{R}{2n},
    \quad
    r_0 := \min\dset{r \le R}{\abs{\B_n(r)} \ge \eps \abs{\A}},
    \quad
    c := 1 - \frac{1}{\abs{\B_n(R)}/\abs{\A}-\eps}.
  \]
  If $n \ge 80$ and $R \le n - r_0$, then the vertex boundary of $\A$ in the Hamming ball $B_n(R)$ satisfies
  \[
    \abs{\bd_{B_n(R)}\A} \ge \frac{2c\sqrt{r_0}}{5n}\eps\abs{\A}.
  \]
\end{lemma}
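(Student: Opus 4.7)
The plan is to apply the local expansion theorem (\cref{thm:expLYM}) layer by layer. Set $a_r := \abs{\A \cap \cS_n(r)}$, $\alpha_r := a_r/\ncr$, and $b_r := \abs{\bd_{B_n(R)}\A \cap \cS_n(r)}$, so $\abs{\bd_{B_n(R)}\A} = \sum_r b_r$. First, the minimality of $r_0$ gives $\sum_{r<r_0}a_r \le \abs{\B_n(r_0-1)} < \eps\abs{\A}$, so all but an $\eps$-fraction of $\A$ lies at layers $\ge r_0$. Second, for each $r$ with $1\le r\le R-1$, both shadows $\bp_n\A_r$ and $\bm_n\A_r$ sit inside $\B_n(R)$, and $\bp_n\A_r\setminus\A_{r+1}$ and $\bm_n\A_r\setminus\A_{r-1}$ are subsets of $\bd_{B_n(R)}\A$ living in distinct layers. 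Bounding their sizes below by $\abs{\bp_n\A_r}-a_{r+1}$ and $\abs{\bm_n\A_r}-a_{r-1}$, and using the identities $\tfrac{r}{s+1}a_r=\alpha_r\binom{n}{r-1}$ and $\tfrac{s}{r+1}a_r=\alpha_r\binom{n}{r+1}$ to rewrite the LYM baseline in \cref{thm:expLYM}, yields the per-layer inequality
\[
  b_{r-1}+b_{r+1} \ge (\alpha_r-\alpha_{r-1})\binom{n}{r-1}+(\alpha_r-\alpha_{r+1})\binom{n}{r+1}+\sqrt{n/(rs)}\,\alpha_r(1-\alpha_r)\binom{n}{r}.
\]

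I would then sum this over $r\in[r_0,R-1]$. Since each $b_k$ appears on the left at most twice, the LHS is bounded above by $2\abs{\bd_{B_n(R)}\A}$. The first two terms on the right form a discrete telescoping expression which, by the log-concavity of $\binom{n}{r}$ and Abel summation, reduces (up to endpoint corrections at $r_0$ and $R$) to contributions from density jumps of $\alpha_r$. Two extremal regimes illuminate why both pieces are needed: if $\A$ is a union of full shells $\bigcup_{r\in T}\cS_n(r)$ then $\alpha_r(1-\alpha_r)\equiv 0$ and the telescoping jump at the boundary of $T$ across layer $r_0$ alone contributes a term of size $\binom{n}{r_0-1}$, matching the true boundary $\cS_n(r_0-1)$; conversely, if each $\alpha_r$ is bounded away from $\{0,1\}$ then the telescoping contribution is small and the expansion term takes over.

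The main obstacle is extracting the explicit factor $\tfrac{c\sqrt{r_0}}{n}\eps\abs{\A}$ from the summed inequality. For this I would use the bound $\sqrt{n/(rs)}\ge 1/\sqrt{r(n-r)}$, valid on the summation range by the hypothesis $R\le n-r_0$, together with \cref{lem:monotone} which yields $\binom{n}{r_0}\ge \abs{\B_n(r_0)}/\sqrt n\ge \eps\abs{\A}/\sqrt n$. The parameter $c$ enters via the assumption $\abs{\B_n(R)}/\abs{\A}-\eps > 1/(1-c)$, which forces $\abs{\B_n(R)\setminus\A}$ to be a definite fraction of $\abs{\B_n(R)}$ and thereby lower-bounds the aggregate $\sum\alpha_r(1-\alpha_r)\binom{n}{r}$ plus the telescoping jump contributions. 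The subtlety is the intermediate regime in which $\alpha_r$ varies nontrivially across layers so that neither the telescoping nor the expansion contribution alone is large; balancing the two against the $c$-constraint, and verifying that the joint lower bound still carries the factor $\sqrt{r_0}$ coming from the combination of $\binom{n}{r_0}$ and $\sqrt{n/(r_0(n-r_0))}$, is the technical heart of the proof.
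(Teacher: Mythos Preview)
Your per-layer inequality is correct, and you have correctly identified the two extremal regimes. However, the proposal stops precisely at the point where the real work begins: you explicitly say that balancing the two contributions in the intermediate regime ``is the technical heart of the proof'' without supplying any mechanism to carry it out. Simply summing your displayed inequality over $r\in[r_0,R-1]$ does not work, because the density-difference terms $(\alpha_r-\alpha_{r\pm1})\binom{n}{r\mp1}$ can be arbitrarily negative (e.g.\ if $\alpha_r=0$ but $\alpha_{r\pm1}$ is large), and they do not telescope: adjacent layers contribute $(\alpha_r-\alpha_{r+1})\binom{n}{r+1}$ and $(\alpha_{r+1}-\alpha_r)\binom{n}{r}$, which carry different weights. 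Abel summation and log-concavity do not rescue this; the negative pieces can swamp the $\alpha_r(1-\alpha_r)$ gain.

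The paper resolves this by a dichotomy rather than a single summed inequality. After first disposing of the case $\abs{\A_R}\ge(1-1.94\eps)\abs{\A}$ by plain LYM on the top layer (a case your sketch omits), it shows that either the total variation $\sum_{r=r_0}^{R-1}\abs{\alpha_r-\alpha_{r+1}}\ge 4c/7$, or else $\alpha_r\le 1-3c/7$ for all $r_0\le r\le R-1$. In the first case one uses only LYM, bounding $\max(b_r,b_{r+1})\ge\abs{\alpha_r-\alpha_{r+1}}\binom{n}{r_0}$ so that negativity never enters. In the second case one uses \cref{thm:expLYM}, but with the specific weighted combination $\tfrac{n-r}{n}b_r+\tfrac{r+1}{n}b_{r+1}$: these weights are chosen so that the density-difference terms cancel \emph{exactly}, leaving only $\tfrac{n-r}{n}\delta_{r+1}^-+\tfrac{r+1}{n}\delta_r^+$, which is nonnegative and can then be summed and bounded below via $1-\alpha_r\ge 3c/7$. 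Your plan is missing both the dichotomy and this cancellation trick; without them the argument does not close.
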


\begin{proof}
  We may assume that $r_0 \ge 1$ and $c > 0$; otherwise the vertex isoperimetric inequality would become trivial. Because $r_0 \le R \le n - r_0$, we know that $R \ge 1$ and
  \begin{equation} \label{eqn:r0-n/2}
    r_0 \le n/2.
  \end{equation}
  By our choice of $r_0$, we get
  \begin{equation} \label{eqn:bn-r0-1}
    \abs{\B_n(r_0 - 1)} < \eps\abs{\A}.
  \end{equation}

  Our goal is prove
  \begin{equation} \label{eqn:c6ro}
    \sum_{r=0}^R b_r \ge \frac{2c\sqrt{r_0}}{5n}\eps\abs{\A},
  \end{equation}
  where
  \[
    b_r := \abs{\bd_n\A \cap \cS_n(r)} \quad \text{ for }0 \le r \le R.
  \]
  We shall analyze the distribution of $\A$ under the partition $\B_n(R) = \bigcup_{r=0}^R S_n(r)$. To that end, we set
  $\A_r := \A \cap \cS_n(r)$ for $0 \le r \le R$.

  \begin{myclaim} \label{claim:large-top-layer}
    If $\abs{\A_R} \ge (1-1.94\eps)\abs{\A}$, then \eqref{eqn:c6ro} holds.
  \end{myclaim}

  \begin{claimproof}[Proof of \cref{claim:large-top-layer}]
    From \cref{lem:LYM}, we know that
    \[
      \abs{\bm_n\A_R} \ge \frac{R}{n-R+1}\abs{\A_R},
    \]
    which implies that
    \begin{multline*}
      b_{R-1} \ge \abs{\bm_n\A_R} - \abs{\A_{R-1}}
      \ge \frac{R}{n-R+1}\abs{\A_R} - (\abs{\A} - \abs{\A_R}) \\
      = \frac{n+1}{n-R+1}\abs{\A_R} - \abs{\A}
      \ge \left(\frac{n+1}{n-R+1}\left(1-1.94\eps\right)-1\right)\abs{\A}.
    \end{multline*}
    Using $\eps = R / (2n)$ and the assumptions that $n \ge 3$ and $R \ge 1$, we can simplify the coefficient of $\abs{\A}$ above as follows:
    \[
      \frac{n+1}{n-R+1}\left(1-1.94\eps\right)-1 = \frac{2n - 1.94(n+1)}{n-R+1}\eps = \frac{0.06n-1.94}{n-R+1}\eps.
    \]
    Because $n \ge 80$, one can check that
    \[
      \frac{0.06n-1.94}{n-R+1} \ge \frac{0.06n - 1.94}{n} \ge \frac{\sqrt{2n}}{5n},
    \]
    which implies that
    \begin{equation*}
      \sum_{r=0}^R b_r \ge b_{R-1} \ge \frac{\sqrt{2n}}{5n}\eps\abs{\A} \stackrel{\eqref{eqn:r0-n/2}}{\ge} \frac{2\sqrt{r_0}}{5n}\eps\abs{\A}. \qedhere
    \end{equation*}
  \end{claimproof}

  Because of \cref{claim:large-top-layer}, hereafter we only consider the case that
  \begin{equation} \label{eqn:ar_assumption}
    \abs{\A_R} \le (1-1.94\eps)\abs{\A} .
  \end{equation}
  In particular \eqref{eqn:ar_assumption} implies that $\abs{\B_n(R-1)} \ge \abs{\A} - \abs{\A_R} \ge \eps\abs{\A}$, and so
  \[
    1 \le r_0 \le R-1.
  \]

  \begin{myclaim} \label{claim:2c3}
    At least one of the following holds:
    \begin{subequations}
      \begin{gather}
        \sum_{r=r_0}^{R-1}\abs{\alpha_r-\alpha_{r+1}} \ge 4c/7, \label{eqn:imp_1} \\
        \alpha_{r} \le 1 - 3c/7, \quad\text{for }r_0 \le r \le R-1, \label{eqn:imp_2}
      \end{gather}
    \end{subequations}
    where the density of $\A_r$ is defined by $\alpha_r := \abs{\A_r}/\abs{\cS_n(r)}$.
  \end{myclaim}

  \begin{claimproof}[Proof of \cref{claim:2c3}]
    For the sake of contradiction assume that neither \eqref{eqn:imp_1} nor \eqref{eqn:imp_2} holds.
    The negation of \eqref{eqn:imp_1} implies that $\alpha_{r'} - \alpha_r < 4c/7$ for all $r_0 \le r, r' \le R$. The negation of \eqref{eqn:imp_2} means that $\alpha_{r'} > 1-3c/7$ for some $r_0 \le r' \le R-1$. Therefore
    \[
      \alpha_r > \alpha_{r'} - 4c/7 > 1-c,\quad\text{for } r_0 \le r \le R,
    \]
    which implies that
    \[
      \abs{\A} \ge \sum_{r=r_0}^{R}\alpha_r\abs{\cS_n(r)} > (1-c)\left(\abs{\B_n(R)}-\abs{\B_n(r_0-1)}\right)
      \stackrel{\eqref{eqn:bn-r0-1}}{\implies}
      1 - c < \frac{1}{\abs{\B_n(R)}/\abs{\A}-\eps}
    \]
    contradicting the definition of $c$.
  \end{claimproof}

  The proof proceeds by analyzing two different scenarios arising from \eqref{eqn:imp_1} and \eqref{eqn:imp_2} --- the former deals with sets $\A_r$
  whose densities are not equally distributed, whereas the latter deals with sets $\A_r$ whose densities are not very close to $1$.

  \paragraph{Case 1.} Suppose \eqref{eqn:imp_1} holds. For every $r_0 \le r \le R$, since $R \le n - r_0$, we know that $\abs{\cS_n(r)} \ge \abs{\cS_n(r_0)}$. Since $r_0 \le R \le n - r_0$, and in particular $r_0 \le n/2$, and the assumption that $n \ge 3$, \cref{lem:monotone} gives $\abs{\cS_n(r_0)} \ge \abs{\B_n(r_0)}/\sqrt{n}$. Because $\abs{\B_n(r_0)} \ge \eps\abs{\A}$, we know that
  \[
    \abs{\cS_n(r)} \ge \eps\abs{\A}/\sqrt{n} \stackrel{\eqref{eqn:r0-n/2}}{\ge} \frac{\sqrt{2r_0}}{n}\eps\abs{\A} \ge \frac{7\sqrt{r_0}}{5n}\eps\abs{\A}, \quad\text{for }r_0 \le r \le R.
  \]
  By \cref{lem:LYM}, for every $0 \le r \le R-1$, we have
  \begin{align*}
    b_r & \ge \abs{\bm_n\A_{r+1}} - \abs{\A_r} \ge \frac{r+1}{n-r}\abs{\A_{r+1}} - \abs{\A_r} = (\alpha_{r+1} - \alpha_r)\abs{\cS_n(r)},\\
    b_{r+1} & \ge \abs{\bp_n\A_r} - \abs{\A_{r+1}} \ge \frac{n-r}{r+1}\abs{\A_r}- \abs{\A_{r+1}} = (\alpha_r - \alpha_{r+1})\abs{\cS_n(r+1)}.
  \end{align*}
  Combining the last three inequalities, for every $r_0 \le r \le R-1$, we obtain
  \[
    \max(b_r, b_{r+1}) \ge \abs{\alpha_r - \alpha_{r+1}}\frac{7\sqrt{r_0}}{5n}\eps\abs{\A}.
  \]
  Summing over $r$ implies \eqref{eqn:c6ro}:
  \begin{equation*}
    \sum_{r=0}^R b_r \ge \frac12\sum_{r=r_0}^{R-1}\max(b_r,b_{r+1}) \ge \frac12\left(\sum_{r=r_0}^{R-1} \abs{\alpha_r-\alpha_{r+1}}\right)\frac{7\sqrt{r_0}}{5n}\eps\abs{\A} \stackrel{\eqref{eqn:imp_1}}{\ge} \frac{2c\sqrt{r_0}}{5n}\eps\abs{\A}.
  \end{equation*}

  \paragraph{Case 2.} Suppose \eqref{eqn:imp_2} holds.
  By \cref{lem:LYM}, we know that
  \begin{align*}
    \delta_r^+ := \abs{\bp_n\A_r} - \frac{n-r}{r+1}\abs{\A_r} \ge 0, & \quad\text{for }0 \le r \le R-1\\
    \delta_r^- := \abs{\bm_n\A_r} - \frac{r}{n-r+1}\abs{\A_r} \ge 0, & \quad\text{for }1 \le r \le R.
  \end{align*}
  Using $\delta_r^+$ and $\delta_r^-$, we can estimate $b_r$ and $b_{r+1}$ more precisely:
  \begin{align*}
    b_r & \ge \abs{\bm_n\A_{r+1}} - \abs{\A_r} = \frac{r+1}{n-r}\abs{\A_{r+1}} + \delta_{r+1}^- - \abs{\A_r},\\
    b_{r+1} & \ge \abs{\bp_n\A_r} - \abs{\A_{r+1}} = \frac{n-r}{r+1}\abs{\A_{r}} + \delta_r^+ - \abs{\A_{r+1}},
  \end{align*}
  which implies for $0 \le r \le R-1$ that
  \[
    \frac{n-r}{n}b_r + \frac{r+1}{n}b_{r+1} \ge \frac{n-r}{n}\delta_{r+1}^- + \frac{r+1}{n}\delta_r^+.
  \]
  Summing over $r_0 - 1 \le r \le R-1$, we obtain
  \begin{multline*}
    \sum_{r=0}^R b_r \ge \sum_{r=r_0-1}^{R-1}\frac{n-r}{n}b_r + \sum_{r=r_0}^R\frac{r}{n}b_r = \sum_{r=r_0-1}^{R-1}\frac{n-r}{n}b_r + \frac{r+1}{n}b_{r+1} \\
    \ge \sum_{r=r_0-1}^{R-1}\frac{n-r}{n}\delta_{r+1}^- + \frac{r+1}{n}\delta_r^+ \ge \sum_{r=r_0}^{R-1}\frac{n-r+1}{n}\delta_r^- + \frac{r+1}{n}\delta_r^+.
  \end{multline*}
  From \cref{thm:expLYM}, we know that
  \[
    \delta_r^- + \delta_r^+ \ge \sqrt{\frac{n}{r(n-r)}}(1-\alpha_r)\abs{\A_r} \stackrel{\eqref{eqn:imp_2}}{\ge} \frac{3c}{7}\sqrt{\frac{n}{r(n-r)}}\abs{\A_r}.
  \]
  For $r_0 \le r \le R - 1$, because $R \le n - r_0$, we obtain
  \begin{multline*}
    \frac{n-r+1}{n}\delta_r^- + \frac{r+1}{n}\delta_r^+ \ge \min\left( \frac{n-r}{n},\frac{r}{n} \right)(\delta_r^- + \delta_r^+) \\
    \ge \frac{3c}{7} \min\left( \sqrt{\frac{n-r}{rn}},\sqrt{\frac{r}{n(n-r)}} \right)\abs{\A_r} \ge \frac{3c\sqrt{r_0}}{7n}\abs{\A_r}.
  \end{multline*}
  Therefore we obtain
  \[
    \sum_{r=0}^R b_r \ge \frac{3c\sqrt{r_0}}{7n}\sum_{r=r_0}^{R-1}\abs{\A_r},
  \]
  which implies \eqref{eqn:c6ro} through the following fact:
  \begin{equation*}
    \sum_{r=r_0}^{R-1}\abs{\A_r} \ge \abs{\A} - \abs{\A_R} - \abs{\B_n(r_0-1)} \stackrel{(\ref{eqn:bn-r0-1},\ref{eqn:ar_assumption})}{\ge} 0.94\eps\abs{\A}. \qedhere
  \end{equation*}
\end{proof}

\begin{proof}[Proof of \cref{thm:iso-ball}]
  Suppose $\rho \in (0,1/2)$, $R \le n/2$ and $\A \subseteq \B_n(R)$ such that
  \begin{equation} \label{eqn:rho-a-assumption}
    \abs{\B_n(\floor{\rho n})} \le \abs{\A} \le \abs{\B_n(R)} - \abs{\B_n(\floor{\rho n})}.
  \end{equation}
  We break the proof into two cases.

  \paragraph{Case 1.} Suppose $\abs{\A} \le \abs{\B_n(R)}/2$. We would like to apply \cref{thm:main_bound} to $\A$. Recall the definitions of $\eps$, $r_0$ and $c$ in \cref{thm:main_bound}:
  \[
    \eps := R/(2n), \quad r_0 := \min\dset{r \le R}{\abs{\B_n(r)} \ge \eps \abs{\A}}, \quad c := 1 - \frac{1}{\abs{\B_n(R)}/\abs{\A}-\eps}.
  \]
  Because $R \le n/2$, we have $\eps \in (0,1/4)$, and so
  \[
    c \ge 1 - \frac{1}{2 - 1/4} = \frac37.
  \]
  Moreover, the assumption \eqref{eqn:rho-a-assumption} implies that $\abs{\B_n(R)} \ge 2\abs{\B_n(\floor{\rho n})}$. Hence $R > \rho n$ and
  \[
    \eps > \rho/3.
  \]

  Let $r_1$ be a positive integer to be chosen later. By \cref{lem:monotone}, we have
  \[
    \eps\abs{\A} \ge \frac{\rho}{3}\abs{\B_n(\floor{\rho n})}
    \ge \frac{\rho}{3}\left(\frac{\abs{\cS_n(\floor{\rho n})}}{\abs{\cS_n(\floor{\rho n} - 1)}}\right)^{r_1}\abs{\B_n(\floor{\rho n}  - r_1)}
    \ge \frac{\rho}{3}\left(\frac{1-\rho}{\rho}\right)^{r_1}\abs{\B_n(\floor{\rho n}  - r_1)}.
  \]
  Because $\rho \in (0,1/2)$, for some $r_1$ depending only on $\rho$, we have
  \[
    \eps\abs{\A} \ge \abs{\B_n(\floor{\rho n}  - r_1)}.
  \]
  Therefore $r_0 \ge \floor{\rho n} - r_1$. For $n \ge n_0$, where $n_0$ depends only on $\rho$, \cref{thm:main_bound} yields
  \[
    \abs{\bd_{B_n(R)}\A} \ge \frac{2c}{5}\frac{\sqrt{r_0}}{n}\eps\abs{\A} \ge \frac{6}{35}\frac{\sqrt{\floor{\rho n} - r_1}}{n}\frac{\rho}{3}\abs{\A} \ge \frac{\rho^{3/2}}{18\sqrt{n}}\abs{\A}.
  \]
  
  \paragraph{Case 2.} Suppose $\abs{\A} > \abs{\B_n(R)}/2$. Set
  \[
    \A^c := \B_n(R)\setminus\A \quad\text{and}\quad \A' := \A^c\setminus \bd_n\A.
  \]
  We would like to apply \cref{thm:main_bound} to $\A'$. The parameters in \cref{thm:main_bound} are
  \[
    \eps := R / (2n), \quad
    r_0' := \min\dset{r \le R}{\abs{\B_n(r)} \ge \eps\abs{\A'}}, \quad
    c' := 1-\frac{1}{{\abs{\B_n(R)}}/{\abs{\A'}}-\eps}.
  \]
  Note that $\abs{\A'} \le \abs{\A^c} \le \abs{\B_n(R)}/2$. In this case,
  \[
    c' \ge 1 - \frac{1}{2-1/4} = \frac{3}{7}.
  \]
  We may assume that $\abs{\bd_{B_n(R)}\A} \le \abs{\A^c}/2$, because otherwise we are done. Thus 
  \[
    \abs{\A'} = \abs{\A^c} - \abs{\bd_{B_n(R)}\A} \ge \frac12\abs{\A^c} = 
    \frac12(\abs{\B_n(R)} - \abs{\A}) \ge \frac12\abs{\B_n(\floor{\rho n})}.
  \]
  Similarly to Case~1, $r_0' \ge \floor{\rho n}-r_2$ for some $r_2$ depending only on $\rho$. \cref{thm:main_bound} yields
  \[
    \abs{\bd_{B_n(R)}\A'} \ge \frac{2c'}{5}\frac{\sqrt{r_0'}}{n}\eps\abs{\A'}
    \ge \frac{6}{35}\frac{\sqrt{\floor{\rho n}-r_2}}{n}\frac{\rho}{3}\abs{\A'} 
    = \frac{2}{35}\frac{\rho^{3/2}-O(1/n)}{\sqrt{n}}\left(\abs{\A^c}-\abs{\bd_{B_n(R)}\A}\right).
  \]
  Observe that $\bd_{B_n(R)}\A' \subseteq \bd_{B_n(R)}\A$.
  Indeed, if $v \in \bd_{B_n(R)}\A'$ then
  $v \not \in \A' \cup \A$ which implies $v \in \bd_{B_n(R)}\A$. Thus
  \[
    \abs{\bd_{B_n(R)}\A}
    \ge \frac{2}{35}\frac{\rho^{3/2}-O(1/n)}{\sqrt{n}}\left(\abs{\A^c}-\abs{\bd_{B_n(R)}\A}\right).
  \]
  For $n \ge n_0$, where $n_0$ depends only on $\rho$, we can rewrite the above
  \begin{equation*}
    \abs{\bd_{B_n(R)}\A} \ge \frac{1}{1+O(1/\sqrt{n})}\frac{2}{35}\frac{\rho^{3/2}-O(1/n)}{\sqrt{n}}\abs{\A^c} \ge \frac{\rho^{3/2}}{18\sqrt{n}}\abs{\A^c}. \qedhere
  \end{equation*}

\end{proof}

\section{Local expansion estimate} \label{sec:local}

Our proof of \cref{thm:expLYM} is by induction, and its outline is similar to the proof in~\cite{keevash}. However ours differs in one key aspect --- we need to choose ``where to apply induction'', whereas in~\cite{keevash} this was immaterial. Besides there are several other technical difficulties we need to overcome.

We shall utilize the following criterion for two interlacing real-rooted quadratic polynomials.

\begin{proposition} \label{prop:interlace}
  Let $p_1(x) = x^2 + B_1x + C_1$ and $p_2(x) = x^2 + B_2x + C_2$ be two monic quadratic polynomials with real coefficients. Suppose $p_i(x)$ has two distinct real roots $x_i^- < x_i^+$ for $i \in \sset{1,2}$. If $x_1^- < x_2^-$, $x_1^+ < x_2^+$ and $(C_1 - C_2)^2 + (B_1 - B_2)(B_1C_2 - B_2C_1) < 0$, then $x_2^- < x_1^+$.
\end{proposition}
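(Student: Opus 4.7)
The plan is to study the difference $p_1(x) - p_2(x)$, which is the linear polynomial $(B_1 - B_2)x + (C_1 - C_2)$. First I would observe that $B_1 \ne B_2$: otherwise the given inequality degenerates to $(C_1 - C_2)^2 < 0$, which is impossible. Consequently the graphs of $p_1$ and $p_2$ meet at the unique point
\[
  x_0 := -\frac{C_1 - C_2}{B_1 - B_2}.
\]

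The key idea is that if the common value $p_1(x_0) = p_2(x_0)$ is strictly negative, then $x_0$ must lie strictly between the two roots of \emph{each} monic quadratic $p_i$ (since a monic quadratic is negative precisely between its real roots). In particular $x_2^- < x_0 < x_1^+$, which yields the desired conclusion (and incidentally proves that both polynomials have distinct real roots automatically, independent of the interlacing hypothesis, which only serves to pin down the ordering).

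So the main step is a direct computation: substitute $x_0$ into $p_2$ and verify that
\[
  (B_1 - B_2)^2 \, p_2(x_0) = (C_1 - C_2)^2 + (B_1 - B_2)(B_1 C_2 - B_2 C_1).
\]
This is a short algebraic identity obtained by expanding $p_2(x_0) = x_0^2 + B_2 x_0 + C_2$ with $x_0 = -(C_1 - C_2)/(B_1 - B_2)$ and clearing the denominator. The hypothesis then asserts exactly that the right-hand side is negative, so $p_2(x_0) < 0$, and we are done.

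There is essentially no obstacle here: the proposition is a one-line identity followed by an elementary observation about where a monic quadratic is negative. The only thing that warrants care is not getting lost in signs during the expansion; writing $B := B_1 - B_2$ and $C := C_1 - C_2$ and computing $B^2 p_2(x_0) = C^2 - B_2 B C + C_2 B^2$, then expanding $-B_2 B C + C_2 B^2 = B(-B_2 C + C_2 B) = B(B_1 C_2 - B_2 C_1)$, keeps the bookkeeping clean.
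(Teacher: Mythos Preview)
Your proposal is correct and follows essentially the same argument as the paper: locate the unique intersection point $x_0 = -(C_1-C_2)/(B_1-B_2)$, verify via the algebraic identity that the hypothesis forces $p_1(x_0)=p_2(x_0)<0$, and conclude $x_2^- < x_0 < x_1^+$. The only cosmetic differences are that the paper expands $p_1(x_0)$ rather than $p_2(x_0)$ (immaterial, since they coincide) and does not spell out why $B_1\neq B_2$, which you handle cleanly.
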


\begin{proof}
  Notice that $p_1(x) = p_2(x)$ at $x = x_0 := -(C_1-C_2)/(B_1-B_2)$. Since $p_1(x_0) = p_2(x_0)$ and
  \[
    p_1(x_0) = \left( \frac{C_1-C_2}{B_1-B_2} \right)^2 - B_1\left( \frac{C_1-C_2}{B_1-B_2} \right) + C_1 = \frac{(C_1-C_2)^2+(B_1-B_2)(B_1C_2-B_2C_1)}{(B_1-B_2)^2} < 0,
  \]
  we know that $x_2^- < x_0 < x_1^+$.
\end{proof}

\begin{proof}[Proof of \cref{thm:expLYM}]
  Without loss of generality, we assume
  \[
    \alpha := \abs{\A} / \abs{\B_n(r)} \in (0,1);
  \]
  because \eqref{eqn:expLYM} would follow from \cref{lem:LYM} immediately when $\alpha \in \sset{0,1}$. We may also assume that $r \le s$, since \cref{thm:expLYM} is symmetric with respect to $r$ and $s$. Indeed, if we replace $\A\subseteq\cS_n(r)$ by $\A' = \dset{[n] \setminus X}{X \in \A}\subseteq\cS_n(n-r)$, then $\abs{\A} = \abs{\A'}$ and $\abs{\bd_n\A} = \abs{\bd_n\A'}$, while the right hand side of \eqref{eqn:expLYM} is invariant under this replacement.
 
  For the $r = 1$ base case, we know $\bm_n\A$ and $\bp_n\A$ precisely:
  \[
    \bm_n\A = \cS_n(0)
    \quad\text{and}\quad
    \bp_n\A = \dset{X \in \cS_n(2)}{X\cap(\cup\A)\neq \varnothing}.
  \]
  Estimate $\abs{\bd_n\A}$ as follows:
  \begin{multline*}
    \abs{\bd_n\A} - \left(\frac{s}{r+1}+\frac{r}{s+1}\right)\abs{\A}
    = \binom{n}{0} + \binom{n}{2} - \binom{n-\alpha n}{2} - \left(\frac{n-1}{2}+\frac{1}{n}\right)\alpha n \\
    = (1-\alpha)\left(\frac{\alpha}{2}n^2+1\right)
    \ge (1-\alpha) 2\sqrt{\frac{\alpha}{2}} n \ge \sqrt{2}(1-\alpha)\alpha n
    \ge \sqrt{\frac{n}{n-1}}\alpha(1-\alpha)n.
  \end{multline*}
  
  For the inductive step, let $r \ge 2$. We first choose where to apply induction. Since each set in $\A$ has size $r$, by the pigeonhole principle, some element of $[n]$ appears in at least $\frac{r}{n}\abs{\A} = \frac{r}{n}\cdot\alpha\ncr = \alpha\binom{n-1}{r-1}$ sets of $\A$. Without loss of generality, we may assume that $n$ is this element. Decompose the projection of $\A$ onto $[n-1]$ into two families:
  \[
    \A_0 := \dset{X \subseteq [n-1]}{X \in \A}
    \quad\text{and}\quad
    \A_1 := \dset{X \subseteq [n-1]}{X \cup \sset{n} \in \A}.
  \]
  Thus, $\A_0 \subseteq \cS_{n-1}(r)$, $\A_1 \subseteq \cS_{n-1}(r-1)$, and $\abs{\A_1} \ge \alpha\binom{n-1}{r-1}$. We set some notation.

  \paragraph{Notation.}
  Set
  \[
    \alpha_0 := \abs{\A_0}/\binom{n-1}{r}
    \quad\text{and}\quad
    \alpha_1 := \abs{\A_1}/\binom{n-1}{r-1}.
  \]
  As $\abs{\A} = \abs{\A_0} + \abs{\A_1}$, we have $\alpha\ncr = \alpha_0\binom{n-1}{r} + \alpha_1\binom{n-1}{r-1}$,
  which implies
  \begin{equation} \label{eqn:convex}
    \alpha = \frac{s}{n}\alpha_0 + \frac{r}{n}\alpha_1.
  \end{equation}
  Since $\abs{\A_1} \ge \alpha\binom{n-1}{r-1}$, we know that $\alpha_1 \ge \alpha$, and hence $\alpha_0 \le \alpha \le \alpha_1$.
  Set
  \begin{equation} \label{eqn:def-x}
    x := \alpha_1 - \alpha_0.
  \end{equation}
  Because $0 \le \alpha_0 \le \alpha_1 \le 1$, we know that
  \[
    0 \le x \le x^*, \quad\text{where }x^* := \frac{n}{s}(1-\alpha).
  \]
  The following constants arise from the induction hypothesis.
  \[
    c := \sqrt{\frac{n}{rs}}, \quad c_0 := \sqrt{\frac{n-1}{r(s-1)}}, \quad c_1 := \sqrt{\frac{n-1}{(r-1)s}}, \quad t := \frac{s}{r+1} - \frac{r}{s+1} \ge 0.
  \]
  It is easy to check:
  \begin{equation} \label{eqn:aux_a}
    c < c_0 \le c_1.
  \end{equation}

  \paragraph{Two estimations.}
  By the induction hypothesis, we estimate the vertex boundary of $\A_0$:
  \begin{equation} \label{eqn:ind0}
    \begin{split}
      \frac{\abs{\bd_{n-1}\A_0}}{\ncr}
      & \ge \left(\frac{s-1}{r+1} + \frac{r}{s}\right)\frac{\abs{\A_0}}{\ncr} + c_0\alpha_0(1-\alpha_0)\frac{\binom{n-1}{r}}{\ncr} \\
      & = \left(\frac{s-1}{r+1} + \frac{r}{s}\right){\alpha_0}\frac{s}{n} + c_0\alpha_0(1-\alpha_0)\frac{s}{n} \\
      & = \left(\frac{s}{r+1} + \frac{r}{s+1}\right)\frac{s}{n}\alpha_0 - \frac{t}{n}\alpha_0 + c_0\alpha_0(1-\alpha_0)\frac{s}{n}.
    \end{split}
  \end{equation}
  Similarly, we estimate the vertex boundary of $\A_1$:
  \begin{equation} \label{eqn:ind1}
    \frac{\abs{\bd_{n-1}\A_1}}{\ncr} \ge \left(\frac{s}{r+1}+\frac{r}{s+1}\right)\frac{r}{n}\alpha_1 + \frac{t}{n}\alpha_1+c_1\alpha_1(1-\alpha_1)\frac{r}{n}.
  \end{equation}

  Now, we can bound $\abs{\bd_n\A}$ from below in two ways:
  \begin{subequations}
    \begin{align}
      \abs{\bd_n\A} & \ge \abs{\bd_{n-1}\A_0} + \abs{\bd_{n-1}\A_1}, \label{eqn:b1} \\
      \abs{\bd_n\A} & \ge \abs{\bd_{n-1}\A_1} + \abs{\bp_{n-1}\A_0} + \abs{\A_1}. \label{eqn:b2}
    \end{align}    
  \end{subequations}
  On the one hand, \eqref{eqn:b1} holds because
  \[
    \bd_{n-1}\A_0 \subseteq \dset{X \in \bd_n\A}{n \not\in X}
    \quad\text{and}\quad
    \dset{X \cup \sset{n}}{X \in \bd_{n-1}\A_1} \subseteq \dset{X \in \bd_n\A}{n \in X}.
  \]
  On the other hand, \eqref{eqn:b2} holds because
  \begin{gather*}
    \dset{X \cup \sset{n}}{X \in \bd_{n-1}\A_1} \subseteq \dset{X \in \bd_n\A}{n \in X}, \\
    \bp_{n-1}\A_0 \subseteq \dset{X \in \bp_n\A}{n \not\in X},
    \quad\text{and}\quad
    \A_1 \subseteq \dset{X \in \bm_n\A}{n \not\in X}.
  \end{gather*}
  Combining \eqref{eqn:b1}, \eqref{eqn:ind0} and \eqref{eqn:ind1}, we obtain the first estimation:
  \begin{multline} \label{eqn:l1}
    \frac{\abs{\bd_n\A}}{\ncr} \ge \left(\frac{s}{r+1}+\frac{r}{s+1}\right)\left(\frac{s}{n}\alpha_0+\frac{r}{n}\alpha_1\right) + \frac{t}{n}(\alpha_1-\alpha_0) +c_0\alpha_0(1-\alpha_0)\frac{s}{n}+c_1\alpha_1(1-\alpha_1)\frac{r}{n} \\
    \stackrel{(\ref{eqn:convex},\ref{eqn:def-x})}{=} \left(\frac{s}{r+1}+\frac{r}{s+1}\right)\alpha + \frac{1}{n}\bigg[tx + c_0\alpha_0(1-\alpha_0) s + c_1\alpha_1(1-\alpha_1)r\bigg].
  \end{multline}
  From \cref{lem:LYM}, we get
  \begin{equation*}
    \abs{\bp_{n-1}\A_0} \ge \frac{s-1}{r+1}\abs{\A_0} = \frac{s-1}{r+1}\frac{s}{n}\alpha_0\ncr,
  \end{equation*}
  which together with \eqref{eqn:b2} and \eqref{eqn:ind1} yields the second estimation:
  \begin{multline} \label{eqn:l2}
    \frac{\abs{\bd_n\A}}{\ncr}
    \ge \left(\frac{s}{r+1}+\frac{r}{s+1}\right)\frac{r}{n}\alpha_1 + \frac{t}{n}\alpha_1+c_1\alpha_1(1-\alpha_1)\frac{r}{n} + \frac{s-1}{r+1}\frac{s}{n}{\alpha_0} + \frac{r}{n}\alpha_1 \\
    = \left(\frac{s}{r+1}+\frac{r}{s+1}\right)\left(\frac{s}{n}\alpha_0+\frac{r}{n}\alpha_1\right) + \frac{t+r}{n}(\alpha_1-\alpha_0) + c_1\alpha_1(1-\alpha_1)\frac{r}{n} \\
    \stackrel{(\ref{eqn:convex},\ref{eqn:def-x})}{=} \left(\frac{s}{r+1}+\frac{r}{s+1}\right)\alpha + \frac{1}{n}\bigg[(t+r)x + c_1\alpha_1(1-\alpha_1)r\bigg].
  \end{multline}

  To simplify notation, denote by $L_1(x)$ and $L_2(x)$ the expressions in the last brackets of \eqref{eqn:l1} and \eqref{eqn:l2} respectively:
  \begin{align*}
    L_1(x) & := tx + sc_0\alpha_0(1-\alpha_0) + rc_1\alpha_1(1-\alpha_1), \\
    L_2(x) & := \left(t+r\right)x + rc_1\alpha_1(1-\alpha_1).
  \end{align*}
  It suffices to show that for all $\alpha \in (0,1)$ and $x \in [0,x^*]$,
  \[
    \max(L_1(x), L_2(x)) \ge nc\alpha(1-\alpha) =: Q.
  \]

  \paragraph{Verification.} Using \eqref{eqn:convex} and \eqref{eqn:def-x}, we can express $\alpha_0, \alpha_1$ in terms of $\alpha$ and $x$:
  \begin{equation} \label{eqn:def-alpha-in-x}
    \alpha_0 = \alpha_0(x) := \alpha - \frac{r}{n}x
    \quad\text{and}\quad
    \alpha_1 = \alpha_1(x) := \alpha + \frac{s}{n}x.
  \end{equation}
  Thus we can view $L_1$ and $L_2$ as quadratic functions of $x$ with coefficients determined by $r, s$ and $\alpha$:
  \begin{align*}
    L_1(x) & = tx + sc_0\left( \alpha - \frac{r}{n}x \right) \left( 1 - \alpha + \frac{r}{n}x \right) + rc_1\left( \alpha + \frac{s}{n}x \right) \left( 1- \alpha - \frac{s}{n}x \right), \\
    L_2(x) & = \left(t+r\right)x + rc_1\left( \alpha + \frac{s}{n}x \right) \left( 1- \alpha - \frac{s}{n}x \right).
  \end{align*}

  We first study the evaluations of $L_1(x)$ at $x = 0$ and $x = x^*$ respectively.
  Observe that
  \[
    L_1(0) = sc_0\alpha(1-\alpha)+rc_1\alpha(1-\alpha) \stackrel{\eqref{eqn:aux_a}}{>} (r+s)c\alpha(1-\alpha) = Q.
  \]
  If $L_1(x^*) \ge Q$, we are done because the leading coefficient of $L_1(x)$ is $-r^2sc_0/n^2-rs^2c_1/n^2$, which is negative, and so $L_1(x) \ge Q$ for $x\in [0, x^*]$. Hereafter we may assume that
  \[
    L_1(x^*) < Q.
  \]
  \setcounter{myclaim}{0}
  \begin{myclaim} \label{claim:alpha}
    If $L_1(x^*) < Q$, then $\alpha < (t+r)/(sc).$
  \end{myclaim}
  \begin{claimproof}[Proof of \cref{claim:alpha}]
    Because $\alpha_0(x^*) = \alpha - \frac{r}{s}(1-\alpha)$ and $\alpha_1(x^*) = 1$, we have
    \[
      L_1(x^*) - Q = t\left( \frac{n}{s}(1-\alpha) \right) + sc_0\left( \alpha - \frac{r}{s}(1-\alpha) \right) \left( 1 - \alpha + \frac{r}{s}(1-\alpha) \right) - nc\alpha(1-\alpha),
    \]
    which after multiplying $s/(n(1-\alpha))$ equals:
    \[
      t + c_0(n\alpha - r) - sc\alpha = (nc_0-sc)\alpha - (rc_0 - t).
    \]
    Because $nc_0 - sc \ge nc - sc = rc > 0$, from the last inequality above, we know that
    \[
      \alpha < \frac{rc_0 - t}{nc_0 - sc}.
    \]
    The claim is implied by the following inequality involving constants determined by $r$ and $s$ only.
    \begin{equation} \label{eqn:check-later-1}
      \frac{rc_0 - t}{nc_0 - sc} < \frac{t+r}{sc}.
    \end{equation}
    We carry out the routine verification of \eqref{eqn:check-later-1} in \cref{sec:inequalities}.
  \end{claimproof}
  
  Next we study the evaluations of $L_2(x)$ and the following variation of $L_2(x)$ at $x = 0$ and $x = x^*$:
  \[
    L_2^-(x) := (t+r)x + rc\alpha_1(1-\alpha_1) = (t+r)x + rc\left( \alpha + \frac{s}{n}x \right) \left( 1- \alpha - \frac{s}{n}x \right).
  \]
  Using the fact that $0 \le \alpha_1(x) \le 1$ for $x\in [0,x^*]$, we observe that
  \[
    L_2(x) \stackrel{\eqref{eqn:aux_a}}{\ge} L_2^-(x) \quad \text{for }x \in [0, x^*].
  \]
  Using the fact that $\alpha_1(x^*) = 1$ and \cref{claim:alpha}, we observe that
  \[
    L_2(x^*) = L_2^-(x^*) = (t+r)x^* = (t+r)\frac{n}{s}(1-\alpha) > nc\alpha(1-\alpha) = Q.
  \]
  Because the leading coefficient of $L_2(x)$ is $-rcs^2/n^2$, which is negative, we may assume that \[
    L_2(0) < Q.
  \]

  \begin{myclaim} \label{claim:roots}
    There exist two roots $x_1^-$ and $x_1^+$ of $L_1(x) = L_2(x)$ such that $x_1^- < 0 < x_1^+ < x^*$, and there exist two roots $x_2^-$ and $x_2^+$ of $L_2^-(x) = Q$ such that $0 < x_2^- < x^* < x_2^+$, and moreover $x_2^- < x_1^+$ (see \cref{fig:quadratic}).
  \end{myclaim}

  \begin{figure}
    \centering
    \begin{tikzpicture}
      \begin{axis} [
        ultra thick,
        axis lines = none,
        domain=-0.1:1.0,
        samples=100,
        legend columns=-1,
        legend style={draw=none, /tikz/every even column/.append style={column sep=12pt}, at={(0.5,-0.2)},anchor=south},
        x post scale=2,
        ymin = -0.5,
      ]
        \addplot [color={rgb,255:red,49; green,114; blue,177}]
        {0.9333 * x + 0.9129 * 4 * (0.4 - 0.3333 * x) * (0.6 + 0.3333 * x) + 1.118 * 2 * (0.4 + 0.6667 * x) * (0.6 - 0.6667 * x)};
        \addplot [color={rgb,255:red,197; green,69; blue,68}]
        {2.9333 * x + 1.118 * 2 * (0.4 + 0.6667 * x) * (0.6 - 0.6667 * x)};
        \addplot [color={rgb,255:red,60; green,139; blue,73}]
        {2.9333 * x + 0.866 * 2 * (0.4 + 0.6667 * x) * (0.6 - 0.6667 * x)};
        \addplot [color={rgb,255:red,96; green,70; blue,164}]
        {1.2471};
        \addplot [color=black]{0};
        \addplot [color=black, dashed] coordinates {(0, 0) (0, 2.7)};
        \node[anchor=north] at (0,-0.07) {$0$};
        \addplot [color=black, dashed] coordinates {(0.2821, 0) (0.2821,1.247)};
        \node[anchor=north] at (0.2821,0) {$x_2^-$};
        \addplot [color=black, dashed] coordinates {(0.3664, 0) (0.3664,1.587)};
        \node[anchor=north] at (0.3664,0) {$x_1^+$};
        \addplot [color=black, dashed] coordinates {(0.9, 0) (0.9, 2.7)};
        \node[anchor=north] at (0.9,-0.04) {$x^*$};
        \legend{$L_1(x)$, $L_2(x)$, $L_2^-(x)$, $y = Q$};
      \end{axis}
    \end{tikzpicture}
    \caption{The graphs of $L_1(x), L_2(x), L_2^-(x)$ and $y = Q$ for $x \in [0, x^*]$, and the intersections for both $L_1(x) = L_2(x)$ and $L^-_2(x) = Q$.} \label{fig:quadratic}
  \end{figure}
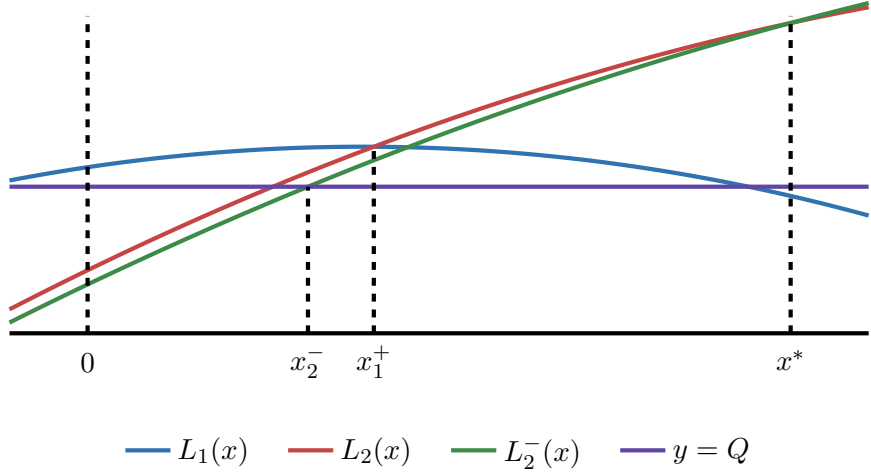

  \begin{claimproof}[Proof of \cref{claim:roots}]
    Note that $L_1(x) - L_2(x)$ is a quadratic polynomial in $x$ with leading coefficient $-r^2s_0/n^2$, which is negative, and moreover $L_1(0) > L_2(0)$ and $L_1(x^*) < L_2(x^*)$. We know that $L_1(x) - L_2(x)$ has two roots $x_1^-$ and $x_1^+$ such that $x_1^- < 0 < x_1^+ < x^*$. Note that the leading coefficient of $L_2^-(x) - Q$ is $-rs^2c/n^2$, which is negative, and moreover $L_2^-(0) < Q < L_2^-(x^*)$. We know that $L_2^-(x) - Q$ has two roots $x_2 ^-$ and $x_2^+$ such that $0 < x_2^- < x^* < x_2^+$.

    We consider the following two monic quadratic polynomials:
    \begin{align*}
      P_1(x) & := -\frac{n^2}{r^2sc_0}\left( L_1(x) - L_2(x) \right) = x^2 + B_1x - C_1, \\
      P_2(x) & := -\frac{n^2}{rs^2c}\left( L_2^-(x) - Q \right) = x^2 - B_2 + C_2,
    \end{align*}
    where
    \begin{gather*}
      B_1 := \frac{n}{r}(1-2\alpha) + \frac{n^2}{rsc_0}, \quad C_1 = \frac{n^2}{r^2}\alpha(1-\alpha), \\
      B_2 := \frac{n}{s}(1-2\alpha) + \frac{n^2(t+r)}{rs^2c}, \quad C_2 = \frac{n^2}{rs}\alpha(1-\alpha).
    \end{gather*}
    To prove $x_2^- < x_1^+$, by \cref{prop:interlace}, it suffices to check
    \[
      (C_1 + C_2)^2 < (B_1 + B_2)(B_2C_1 - B_1C_2),
    \]
    which is equivalent to the following inequalities:
    \[
      \left( \frac{n^2}{r^2} + \frac{n^2}{rs} \right)^2 \alpha^2(1-\alpha)^2 < \left( \frac{n^2}{rs}(1-2\alpha) + \frac{n^2}{rsc_0} + \frac{n^2(t+r)}{rs^2c} \right) \left( \frac{n^4(t+r)}{r^3s^2c} - \frac{n^4}{r^2s^2c_0} \right) \alpha(1-\alpha),
    \]
    which after multiplying both sides by $r^4s^3/(n^6\alpha(1-\alpha))$ is equivalent to:
    \[
      \frac{s}{r}\alpha(1-\alpha) < \left( 1-2\alpha + \frac{t+r}{sc} + \frac{1}{c_0}\right)\left( \frac{t+r}{rc} -\frac{1}{c_0} \right).
    \]
  
    We have successfully eliminated $x$ and reduced the problem to a quadratic inequality of $\alpha$:
    \[
      -\frac{s}{r}\alpha^2 + 2\left( \frac{s}{2r} + \frac{t+r}{rc} - \frac{1}{c_0} \right)\alpha - \left( 1 + \frac{t+r}{sc} + \frac{1}{c_0} \right) \left( \frac{t+r}{rc} - \frac{1}{c_0} \right) < 0,
    \]
    which is ensured if its discriminant is negative. Finally, we note that the negativity of the discriminant is equivalent to
    \begin{equation} \label{eqn:check-later-2}
      \left( \frac{s}{2r} + \frac{t+r}{rc} - \frac{1}{c_0} \right)^2 < \frac{s}{r}\left( 1 + \frac{t+r}{sc} + \frac{1}{c_0} \right) \left( \frac{t+r}{rc} - \frac{1}{c_0} \right).
    \end{equation}
    We carry out the routine verification of \eqref{eqn:check-later-2} in \cref{sec:inequalities}.  
  \end{claimproof}

  Recall that $L_2(x) \ge L_2^-(x)$ for $x\in [0, x^*]$. In particular $L_2(x_2^-) \ge L_2^-(x_2^-) = Q$, which implies that $L_2(x_2^-) \ge Q$ for $x\in [x_2^-, x^*]$ by the concavity of $L_2(x)$. Particularly $L_1(x_1^+) = L_2^-(x_1^+) > Q$, which implies that $L_1(x) > Q$ for $x\in [0, x_1^+]$ by the concavity of $L_1(x)$. Since $x_2^- < x_1^+$ from \cref{claim:roots}, we get the desired inequality $\max(L_1(x), L_2(x)) \ge Q$ for all $x\in [0, x^*]$ for the inductive step.
\end{proof}

\section{Sharpness} \label{sec:sharp}

A random variable $H$ is said to have the \emph{hypergeometric distribution} with parameters $r, m, n$, written as $H \sim \hg(r; m, n)$, if its probability mass function is given by
\[
  \Pr(H = k) = \begin{cases}
    \binom{m}{k}\binom{n-m}{r-k}/\binom{n}{r} & \text{if }k = 0, 1, \dots, r, \\
    0 & \text{otherwise}.
  \end{cases}
\]
We need the following simple fact about hypergeometric distribution. We shall use the inequality $\binom{2m}{m}\ge 2^{2m}/(2\sqrt{m})$ (see, for example, \cite[Proposition 3.6.2]{MR2469243} for a proof).

\begin{proposition} \label{prop:hg-middle}
  If $H \sim \hg(r; \floor{n/2}, n)$, then for all $k \in \N$,
  \[
    \Pr(H = k) \le O\left(\sqrt{\frac{n}{r(n-r)}}\right).
  \]
\end{proposition}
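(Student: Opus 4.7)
The plan is to exploit the hypergeometric symmetry identity
\begin{equation*}
\Pr(H=k) = \frac{\binom{\lfloor n/2\rfloor}{k}\binom{\lceil n/2\rceil}{r-k}}{\binom{n}{r}} = \frac{\binom{r}{k}\binom{n-r}{\lfloor n/2\rfloor - k}}{\binom{n}{\lfloor n/2\rfloor}},
\end{equation*}
which is verified by expanding both sides into factorials and cancelling (both equal $(\lfloor n/2\rfloor)!\,(\lceil n/2\rceil)!\,r!\,(n-r)!/(n!\,k!\,(r-k)!\,(\lfloor n/2\rfloor-k)!\,(\lceil n/2\rceil-r+k)!)$). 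The right-hand form is the useful one here: its denominator is the central binomial coefficient $\binom{n}{\lfloor n/2\rfloor}$, to which the hinted lower bound applies, whereas the original denominator $\binom{n}{r}$ can be much smaller than $2^n/\sqrt{n}$ when $r$ is far from $n/2$, which would break the argument.

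Next, upper bound the numerator by replacing each factor with its maximum over $k$, namely the central binomials $\binom{r}{\lfloor r/2\rfloor}$ and $\binom{n-r}{\lfloor(n-r)/2\rfloor}$. A Stirling-based estimate gives $\binom{t}{\lfloor t/2\rfloor} = O(2^t/\sqrt{t})$ for every $t\ge 1$, yielding a numerator bound of $O(2^n/\sqrt{r(n-r)})$. For the denominator, apply the hint $\binom{2m}{m}\ge 2^{2m}/(2\sqrt{m})$ directly when $n=2m$, and reduce to the even case via $\binom{2m+1}{m} = \tfrac{2m+1}{m+1}\binom{2m}{m}$ when $n$ is odd; in either parity this yields $\binom{n}{\lfloor n/2\rfloor} = \Omega(2^n/\sqrt{n})$. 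Dividing, the factors of $2^n$ cancel and produce the desired bound $\Pr(H=k) = O(\sqrt{n/(r(n-r))})$. Values of $k$ outside the support of $H$ contribute $0$ and need no argument.

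The main step of substance is the symmetry identity, which is completely elementary; the rest is standard central-binomial asymptotics. The only real annoyance is parity bookkeeping to guarantee that the implicit constants in $O(\cdot)$ and $\Omega(\cdot)$ are absolute, but the one-line reductions above handle this uniformly, so no genuine obstacle arises.
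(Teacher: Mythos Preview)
Your proof is correct and essentially identical to the paper's own argument: the paper also applies the hypergeometric symmetry identity to rewrite $\Pr(H=k) = \binom{r}{k}\binom{n-r}{m-k}/\binom{n}{m}$ with $m=\lfloor n/2\rfloor$, then bounds each numerator factor by $O(2^t/\sqrt{t})$ and the denominator by $\Omega(2^n/\sqrt{n})$ using the same central-binomial estimate $\binom{2m}{m}\ge 2^{2m}/(2\sqrt m)$. Your write-up is in fact more careful about the parity bookkeeping than the paper, which simply records $\binom{n}{\lfloor n/2\rfloor}=\Theta(2^n/\sqrt n)$.
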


\begin{proof}
  Put $m := \floor{n/2}$. Using $\binom{n}{r} \le \binom{n}{\floor{n/2}} = \Theta(2^n/\sqrt n)$, we compute
  \begin{equation*}
    \Pr(H = k) = \binom{m}{k}\binom{n-m}{r-k}/\binom{n}{r} = \binom{r}{k}\binom{n-r}{m-k}/\binom{n}{m}
    \le O\left(\frac{2^r}{\sqrt{r}}\frac{2^{n-r}}{\sqrt{n-r}}\frac{\sqrt{n}}{2^n}\right). \qedhere
  \end{equation*}
\end{proof}

Now we are ready to prove \cref{prop:iso-ball-sharp,prop:local-expansion-sharp}.

\begin{proof}[Proof of \cref{prop:iso-ball-sharp}]
  Given $\eps \in (0,1/2)$ and $\alpha \in (\eps, 1-\eps)$. Consider $n, R \in \N$ such that $\eps n < R \le n/2$.
  Set $Y := \sset{1,\dots,\floor{n/2}}$, and for all integers $k$, put
  \[
    \C(k) := \dset{X \in \B_n(R)}{\abs{X \cap Y} \le \abs{X}/2 + k}.
  \]
  Because $\C(k) = \varnothing$ for $k < -R/2$, and $\C(k) = \B_n(R)$ for $k > R/2$, we can take $\M$ such that $\C(k-1) \subseteq \M \subseteq \C(k)$, for some integer $k$, and $\abs{\M} = \floor{\alpha\abs{\B_n(R)}}$. Note that
  \[
    \bd_{B_n(R)}\M \subseteq \dset{X \in \B_n(R)}{\abs{X \cap Y} - \floor{\abs{X}/2} \in \sset{k, k+1}}.
  \]
  Thus we estimate the size of $\bd_{B_n(R)}\M$ by
  \[
    \abs{\bd_{B_n(R)}\M} \le \sum_{r=0}^R \Pr(H_r - \floor{r/2} \in \sset{k, k + 1})\binom{n}{r}.
  \]
  By \cref{prop:hg-middle}, we know that, for $R / 2 \le r \le R$,
  \[
    \Pr(H_r - \floor{r/2} \in \sset{k, k + 1}) = O\left(\sqrt{\frac{n}{r(n-r)}}\right) = O_\eps\left(1/\sqrt{n}\right).
  \]
  Thus we further estimate the size of $\bd_{B_n(R)}\M$ by
  \[
    \abs{\bd_{B_n(R)}\M} \le O_\eps\left( 1/\sqrt{n} \right)\abs{\B_n(R)} + \abs{\B_n(R_0)},
  \]
  where $R_0 := \floor{R/2}$. By \cref{lem:monotone}, we know that
  \begin{multline*}
    \frac{\abs{\B_n(R_0)}}{\abs{\B_n(R)}} \le \frac{\abs{\cS_n(R_0)}}{\abs{\cS_n(R)}} = \frac{\binom{n}{R_0}}{\binom{n}{R}} = \frac{R(R-1)\dots(R_0+1)}{(n-R_0)(n-R_0-1)\dots(n-R+1)} \\
    \le \left(\frac{R}{2R-R_0}\right)^{R-R_0} \le \left( \frac23 \right)^{\eps n/2} \le O_\eps\left( \frac{1}{\sqrt{n}} \right).
  \end{multline*}
  Thus $\abs{\bd_{B_n(R)}\M} \le O_\eps\left( 1/\sqrt{n} \right)\abs{\B_n(R)} \le O_\eps\left( 1/\sqrt{n} \right)\min(\abs{\M}, \abs{\B_n(R)\setminus \M})$.
\end{proof}

\begin{proof}[Proof of \cref{prop:local-expansion-sharp}]
  Given $\eps \in (0, 1/2)$ and $\alpha \in [0,1]$, consider $n, r, s \in \N$ such that 
  \[
    r + s = n \quad\text{and}\quad \eps n \le r, s \le (1-\eps)n.
  \]
  Set $Y := \sset{1,\dots,\floor{n/2}}$ and for all integers $k$,
  \[
    \C(k) := \dset{X \in \cS_n(r)}{\abs{X \cap Y} \le r/2 + k}.
  \]
  Because $\C(k) = \varnothing$ for $k < -r/2$, and $\C(k) = \S_n(r)$ for $k > r/2$, we can take $\C$ such that $\C(k-1) \subseteq \C \subseteq \C(k)$, for some integer $k$, and $\abs{\C} = \floor{\alpha\abs{\cS_n(r)}}$. Set
  \[
    \C^+ := \dset{X \in \cS_n(r+1)}{\abs{X \cap Y} \le r / 2 + k - 1}.
  \]
  Because $\bm_n\C^+ \subseteq \C(k-1) \subseteq \C$, \cref{lem:LYM} gives that
  \[
    \abs{\C^+} \le \frac{s}{r+1}\abs{\bm_n\C^+} \le \frac{s}{r+1}\abs{\C}.
  \]
  Note that
  \[
    \bp_n\C \setminus \C^+ \subseteq \dset{X \in \cS_n(r+1)}{\abs{X \cap Y} - \floor{r/2} \in \sset{k, k+1}}.
  \]
  The right hand side of the above has size
  \[
    \Pr(H_{r+1} \in \sset{\floor{r/2} + k, \floor{r/2} + k+1})\binom{n}{r+1}, \quad\text{where }H_{r+1} \sim \hg(r+1; \floor{n/2}, n).
  \]
  Thus by \cref{prop:hg-middle}, we can estimate $\abs{\bp_n\C}$ as follows:
  \[
    \abs{\bp_n\C} \le \abs{\C^+} + O\left(\sqrt{\frac{n}{(r+1)(s-1)}}\right)\binom{n}{r+1} \le \frac{s}{r+1}\abs{\C} + O_\eps\left( 1/\sqrt{n} \right)\ncr.
  \]
  The lower shadow of $\C$ can be estimated similarly:
  \begin{equation*}
    \abs{\bm_n\C} \le \frac{r}{s+1}\abs{\C} + O_\eps\left( 1/\sqrt{n} \right)\ncr. \qedhere
  \end{equation*}
\end{proof}

\bibliographystyle{alpha}
\bibliography{iso_ball}

\appendix

\section{Proof of \texorpdfstring{\cref{lem:monotone}}{Lemma~6}} \label{sec:binomial}

\begin{proof}[Proof of \cref{lem:monotone}]
  Observe that for every $k \le r$, we have
  \[
    \frac{\binom{n}{k}}{\ncr} = \frac{k+1}{r+1} \cdot \frac{n-r}{n-k} \cdot \frac{\binom{n}{k+1}}{\binom{n}{r+1}} \le \frac{\binom{n}{k+1}}{\binom{n}{r+1}},
  \]
  which implies that
  \[
    \frac{\abs{\B_n(r)}}{\abs{\cS_n(r)}}
    = \sum_{k=0}^r\frac{\binom{n}{k}}{\ncr}
    \le \sum_{k=0}^r\frac{\binom{n}{k+1}}{\binom{n}{r+1}}
    \le \sum_{k=0}^{r+1}\frac{\binom{n}{k}}{\binom{n}{r+1}}
    = \frac{\abs{\B_n(r+1)}}{\abs{\cS_n(r+1)}}.
  \]
  In particular, when $r \le n/2$,
  \[
    \frac{\abs{\cS_n(r)}}{\abs{\B_n(r)}}
    \ge \frac{\abs{\cS_n(\floor{n/2})}}{\abs{\B_n(\floor{n/2})}}.
  \]
  \paragraph{Case: $n$ is even.} We know that 
  \[
    \abs{\cS_n(\floor{n/2})} = \binom{n}{n/2} \quad\text{and}\quad \abs{\B_n(\floor{n/2})} = 2^{n-1} + \frac{1}{2}\binom{n}{n/2}.
  \]
  Since $\binom{n}{n/2} \ge 2^n/(\sqrt{2n})$ and $n \ge 3$, we get
  \[
    \frac{\abs{\cS_n(\floor{n/2})}}{\abs{\B_n(\floor{n/2})}} = \frac{1}{2^{n-1}/\binom{n}{n/2}+1/2} \ge \frac{1}{\sqrt{2n}/2+1/2} \ge \frac{1}{\sqrt{n}}.
  \]
  
  \paragraph{Case: $n$ is odd.} We know that
  \[
    \abs{\cS_n(\floor{n/2})} = \binom{n}{(n-1)/2} \quad\text{and}\quad \abs{\B_n(\floor{n/2})} = 2^{n-1}.
  \]
  Since $\binom{n}{(n-1)/2} = \frac12\binom{n+1}{(n+1)/2} \ge 2^n/\sqrt{2(n+1)}$ and $n \ge 3$, we get 
  \begin{equation*}
    \frac{\abs{\cS_n(\floor{n/2})}}{\abs{\B_n(\floor{n/2})}} = \frac{\binom{n}{(n-1)/2}}{2^{n-1}} \ge \frac{2}{\sqrt{2(n+1)}} \ge \frac{1}{\sqrt{n}}. \qedhere
  \end{equation*}
\end{proof}

\section{Verification of \texorpdfstring{\eqref{eqn:check-later-1}}{(17)} and \texorpdfstring{\eqref{eqn:check-later-2}}{(18)}} \label{sec:inequalities}

\begin{proof}[Proof of \eqref{eqn:check-later-1}]
Eliminating the denominators, \eqref{eqn:check-later-1} is equivalent to the following inequalities
\[
  sc(rc_0 - t) < (t+r)(nc_0 - sc) \quad\Longleftrightarrow\quad rsc(c_0 + 1) < (t+r)nc_0.
\]
Recall $c < c_0$ from \eqref{eqn:aux_a}. Because $c(c_0 + 1) < c_0(c+1)$, it suffices to check
\[
  rs(c+1) \le (t+r)n \quad\Longleftrightarrow\quad c \le t\left(\frac{1}{r}+\frac{1}{s}\right) + \frac{r}{s}.
\]
Recall that
\[
  c = \sqrt{\frac{n}{rs}} = \sqrt{\frac1r + \frac1s}
  \quad\text{and}\quad
  t = \frac{s}{r+1} - \frac{r}{s+1} \ge 0.
\]
It suffices to check the following is non-negative:
\[
  \left(\left( \frac{s}{r+1} - \frac{r}{s+1} \right) \left( \frac1r + \frac1s \right) + \frac{r}{s}\right)^2 - \left( \frac1r + \frac1s \right),
\]
which after multiplying $r^2(r+1)^2s(s+1)^2$ equals
\begin{gather*}
  (s-r)^5 \\
  + (7r+2)(s-r)^4 \\
  + (r^3 + 17r^2 + 9r + 1)(s-r)^3 \\
  + r(4r^3 + 17r^2 + 10r + 1)(s-r)^2 \\
  + r(r+1)^2(r^3 + 3r^2 - 3r - 1)(s-r) \\
  + (r-2)r^2(r+1)^4,
\end{gather*}
which clearly is non-negative for $r \ge 2$ and $s \ge r$.
\end{proof}

\begin{proof}[Proof of \eqref{eqn:check-later-2}]
  After expanding both sides a bit, \eqref{eqn:check-later-2} is equivalent to
  \begin{multline*}
    \frac{s^2}{4r^2} + \left(\frac{t+r}{rc} - \frac{1}{c_0} + \frac{s}{r} \right)\left(\frac{t+r}{rc} - \frac{1}{c_0} \right) < \left( \frac{s}{r} + \frac{r(t+r)}{c} + \frac{s}{rc_0} \right) \left( \frac{t+r}{rc} - \frac{1}{c_0} \right) \\
    \Longleftrightarrow\quad
    \frac{s^2}{4r^2} < \left( \left( r+\frac1r \right)\frac{t+r}{c} + \frac{r+s}{rc_0} \right)\left( \frac{t+r}{rc} - \frac{1}{c_0} \right),
  \end{multline*}
  which after multiplying both sides by $r^2$ and expanding the right hand side is equivalent to
  \begin{multline*}
    \frac{s^2}{4} < \frac{(r^2+1)(t+r)^2}{c^2} + \frac{(s-r^3)(t+r)}{cc_0} - \frac{r(r+s)}{c_0^2} \\
    = \frac{(r^2t+t+r)(t+r)}{c^2} + \frac{s(t+r)}{cc_0} + \frac{r^3(t+r)}{c}\left( \frac{1}{c}-\frac{1}{c_0} \right) - \frac{r(r+s)}{c_0^2}.
  \end{multline*}
  Using $c < c_0$ from \eqref{eqn:aux_a}, it suffices to check the following is positive:
  \[
    \frac{(r^2t+t+r)(t+r)}{c^2} + \frac{s(t+r)}{c_0^2} - \frac{r(r+s)}{c_0^2} - \frac{s^2}{4} = \frac{(r^2t+t+r)(t+r)}{c^2} + \frac{st-r^2}{c_0^2} - \frac{s^2}{4},
  \]
  which after substituting $c^2 = (r+s)/(rs)$ and $c_0^2 = (r+s-1)/(r(s-1))$ equals
  \[
    \frac{(r^2t+t+r)(t+r)rs}{r+s} + \frac{(st-r^2)r(s-1)}{r+s-1} - \frac{s^2}{4},
  \]
  which after multiplying $4(r+1)^2(s+1)^2(r+s)(r+s-1)$ equals
  \begin{gather*}
    (4r^3 + 3r^2 + 6r - 1)(s-r)^6 \\
    + (4 r^5+32 r^4+32 r^3+51 r^2-2 r-1) (s-r)^5 \\
    + (24 r^6+100 r^5+122 r^4+160 r^3+3 r^2-14 r+1)(s-r)^4 \\
    + (52 r^7+152 r^6+208 r^5+232 r^4-12 r^3-59 r^2-2 r+1)(s-r)^3 \\
    + r(48 r^7+112 r^6+163 r^5+156 r^4-48 r^3-96 r^2-11 r+4) (s-r)^2 \\
    + r^2(r+1)^2(16 r^5+32 r^3-23 r^2-14 r+5)(s-r) \\
    + 2r^3(r+1)^4,
  \end{gather*}
  which clearly is positive for $r \ge 2$ and $s \ge r$.
\end{proof}

\end{document}